\documentclass[a4paper,11pt,titlepage,twoside]{article}

\usepackage{graphicx}
\usepackage[T1]{fontenc} 
\usepackage[utf8]{inputenc}
\usepackage[english]{babel}
\usepackage{soul}
\usepackage{amsfonts}
\usepackage{amsmath}
\usepackage{amsthm}
\usepackage{amssymb}
\usepackage{mathrsfs}
\usepackage[top=5.5cm, bottom=3cm, left=4cm, right=4cm]{geometry}
\usepackage{setspace}
\usepackage{afterpage}
\usepackage{extarrows}
\usepackage{fancyhdr}
\usepackage{titlesec}
\usepackage{enumitem} \setlist{nosep}
\usepackage[pdftex,breaklinks,colorlinks,linkcolor=blue,
anchorcolor=blue]{hyperref}

\theoremstyle{definition}
\newtheorem{defin}{Definition}[section]
\theoremstyle{plain}
\newtheorem{theo}[defin]{Theorem}
\newtheorem{lem}[defin]{Lemma}
\newtheorem{pro}[defin]{Proposition}

\theoremstyle{definition}
\newtheorem{exm}[defin]{Example}
\newtheorem{rem}[defin]{Remark}
\numberwithin{equation}{section}


\newcommand{\D}{\mathfrak{D}}
\renewcommand{\H}{\mathfrak{H}}

\newcommand{\n}[1]{\|#1\|}
\newcommand{\nor}{\|\cdot\|}
\newcommand{\nosw}[1]{\|#1\|_{\s+\w}}

\renewcommand{\l}{\langle}
\renewcommand{\r}{\rangle}

\newcommand{\R}{\mathbb{R}}
\newcommand{\C}{\mathbb{C}}

\newcommand{\pint}{\l\cdot,\cdot\r}
\newcommand{\pin}[2]{\l#1 , #2\r}

\newcommand{\no}{\noindent}
\newcommand{\ol}{\overline}

\newcommand{\sub}{\subseteq}

\newcommand{\mez}{\frac{1}{2}}
\renewcommand{\t}{\mathfrak{t}}
\newcommand{\s}{\mathfrak{s}}
\newcommand{\w}{\mathfrak{w}}

\newcommand{\RR}{\Sigma}

\newcommand{\af}{\mathfrak{a}}
\newcommand{\bb}{\mathfrak{b}}

\renewcommand\labelenumi{\emph{(\roman{enumi})}}
\renewcommand\theenumi\labelenumi

\makeatletter
\newcommand{\oset}[2]{%
	{\mathop{#2}\limits^{\vbox to 2.7\ex@{\kern-\tw@\ex@
				\hbox{\footnotesize  #1}\vss}}}}
\makeatother

\setlength{\headsep}{10pt}

\titleformat{\section}
{\normalfont\fillast \fontsize{12}{15}\scshape}{\thesection.}{0.8em}{}

\titleformat{\subsection}
{\normalfont\fillast \fontsize{11}{12}\scshape}{\thesubsection.}{0.8em}{}

\pagestyle{fancy}

\fancyhf{}

\fancyhead[CE]{{\footnotesize  ROSARIO CORSO}}
\fancyhead[CO]{ {\fontsize{7}{10}    \textsc{\uppercase{A Lebesgue-type decomposition for non-positive forms}}}}
\fancyhead[LE]{{\small \thepage}}
\fancyhead[RO]{{\small  \thepage}}

\begin{document}

\thispagestyle{plain}

\begin{center}
	\large
	{\uppercase{\bf A Lebesgue-type decomposition for \\ non-positive sesquilinear forms}} \\
	\vspace*{0.5cm}
	ROSARIO CORSO
\end{center}

\normalsize 
\vspace*{1cm}	

\small 

\begin{minipage}{11.8cm}
	{\scshape Abstract.} 
	A Lebesgue-type decomposition of a (non necessarily non-negative) sesquilinear form with respect to a non-negative one is studied. This decomposition consists of a sum of three parts: two are dominated by an absolutely continuous form and a singular non-negative one, respectively, and the latter is majorized by the product of an absolutely continuous and a singular non-negative forms.\\
	The Lebesgue decomposition of a complex measure is given as application.
\end{minipage}

\vspace*{.5cm}

\begin{minipage}{11.8cm}
	{\scshape Keywords:} sesquilinear forms, Lebesgue decomposition, regularity, singularity, complex measures, numerical range
\end{minipage}

\vspace*{.5cm}

\begin{minipage}{11.8cm}
	{\scshape MSC (2010):} 47A07, 15A63, 28A12, 47A12.
\end{minipage}

\vspace*{1cm}
\normalsize

\section{Introduction}

In \cite{Simon} Simon proved a decomposition of a non-negative form defined on a dense subspace of a Hilbert space into the sum of two non-negative forms such that one is the greatest non-negative form which is smaller than the form and {\it closable}. The second form is referred as the {\it singular} part.

However, the definition of singular non-negative form, in terms of sequences, goes back to Koshmanenko \cite{Kos_sing} (see also his book \cite{Kos} dedicated to singular forms).

Simon, again in \cite{Simon}, stated the correspondent decomposition of a non-negative form $\t$ into the sum of a closable (or, with another terminology, {\it absolutely continuous}) form $\t_a$ and a singular form $\t_s$ with respect to a second non-negative form $\w$ (see also \cite{Kos}). In this setting, $\t$ and $\w$ are defined on a common complex vector space. 
The study of this last so-called {\it Lebesgue decomposition} was continued by Hassi, Sebestyén, De Snoo in \cite{HSdeS}. 
Their framework involves the notion of parallel sum of forms, which is inspired by the one for non-negative operators used by Ando \cite{Ando}. A proof with a different approach was developed by Sebestyén, Tarcsay and Titkos \cite{STT}.

The Lebesgue decomposition of non-negative forms, as the name suggests, is inspired to the classical Lebesgue decomposition of non-negative measures (or, in more generality, additive set functions). Moreover, these notions are related. Indeed, a non-negative measure induces a non-negative form and the absolutely continuous parts are in correspondence, as well as the singular parts (see \cite[Theorem 5.5]{HSdeS} and also \cite[Theorems 3.2 and 3.4]{STT}). 

Recently, Di Bella and Trapani \cite{Tp_DB} have given a notion of regularity and singularity for a (non-necessarily non-negative) sesquilinear form with respect to a non-negative one and then they proved a correspondent Lebesgue decomposition theorem.
More precisely, let $\w,\t$ be forms on $\D$, $\w$ being non-negative. We denote by $M(\t)$ the set of non-negative sesquilinear forms $\s$ satisfying the inequality $|\t(\xi,\eta)|\leq \s[\xi]^\mez \s[\eta]^\mez$ for all $\xi,\eta \in \D$. Then a sesquilinear form $\t$ is $\w$-{\it regular} if there exists $\s\in M(\t)$ such that $\s$ is $\w$-absolutely continuous. On the other hand, $\t$ is {\it $\w$-singular} if for every $\phi\in \D$ there exists a sequence $(\phi_n)\subset\D$ with
\begin{equation*}
\label{def:sing}
\lim_{n\to +\infty} \w[\phi_n]=0 \;\;\text{ and }\; \lim_{n\to +\infty} \t[\phi-\phi_n] =0.
\end{equation*}
Furthermore, Theorem 4.3 of \cite{Tp_DB} states that if $M(\t)\neq \varnothing$, then $\t=\t_r+\t_s$ where $\t_r$ is $\w$-regular and $\t_s$ is $\w$-singular.

In this paper Di Bella and Trapani's theorem is reconsidered. First of all, in analogy to the notion of $\w$-regularity, one can give a notion of singularity of a form $\t$ (coherent to the classical one in the non-negative case) as follows
\begin{equation}
\tag{ss}\label{def_intr_ss}
\text{$\exists\s\in M(\t)$ such that $\s$ is $\w$-singular.}
\end{equation}
This idea is supported by the following fact from the Theory of Measure.  
If $\mu,\nu$ are (complex) measure on the same $\sigma$-algebra and $\nu$ is non-negative, then $\mu$ is $\nu$-absolutely continuous (resp. $\nu$-singular) if and only if it is dominated by an $\nu$-absolutely continuous (resp. $\nu$-singular) non-negative measure. \\
Nevertheless, condition (\ref{def_intr_ss}) does not always hold for the singular part of a form in \cite{Tp_DB} (see Remark \ref{cexm}), but actually it is a stronger notion. For this reason, we give to a form $\t$ satisfying (\ref{def_intr_ss}) the name of $\w$-{\it strongly singular form}. \\
However, it turns out (Theorem \ref{Leb_th}) that every sesquilinear form  $\t$ such that $M(\t)\neq \varnothing$ can be decomposed as $\t=\t_{r}+\t_m+\t_{ss}$, where $\t_{r}$ is the $\w$-regular part, $\t_{ss}$ the $\w$-strongly singular part and $\t_m$ is a form (called $\w$-{\it mixed}) which is dominated by the product of a non-negative $\w$-absolutely continuous form and a non-negative $\w$-singular form. This is the version of the Lebesgue decomposition that we states in the present article.

The organization of this paper is as follows. In Sections \ref{sec_pre} we establish some properties and characterizations of the forms considered above, as well as some examples. Under simple conditions on the values of a $\t$ (Proposition \ref{pro_N_t}) one can see cases where a $\w$-mixed form is identically zero (for example assuming the condition of non-negativity) or that the notions of $\w$-singularity and $\w$-strongly singularity are equivalent. Section \ref{sec_Leb} contains the Lebesgue decomposition of forms as stated above, and shows also that it is not the same if one chooses a different non-negative dominant form $\s\in M(\t)$. Finally, relations between measures and forms are investigated in Section \ref{sec_meas}.  In particular, the Lebesgue decomposition of a complex measure with respect to a non-negative one is proved through sesquilinear forms.

\section{Preliminaries}
\label{sec_pre}

To make the topic on sesquilinear forms as self-contained as possible we begin recalling basic notions and properties. 
A {\it sesquilinear form} $\t$ on a complex vector space $\D$ (called the {\it domain} of $\t$) is a map $\D\times \D\to \C$ which is linear in the first component and anti-linear in the second one. The map $\D\to \C$ defined by $\phi \mapsto \t[\phi]:=\t(\phi,\phi)$ is the {\it quadratic form} associated to $\t$. The {\it polarization identity}
$$
\t(\phi,\psi)=\frac{1}{4} \sum_{k=0}^{3} i^k\t[\phi+i^k\psi], \qquad \forall \phi,\psi \in \D
$$
connects quadratic and sesquilinear forms. The {\it scalar multiple} $\alpha \t$, with $\alpha \in \C$, is defined as
\begin{align*}
(\alpha \t)(\phi,\psi)&:=\alpha\t(\phi,\psi), \qquad  \phi,\psi \in \D.
\end{align*}
Given two sesquilinear forms $\t_1,\t_2$ on $\D_1$ and $\D_2$, respectively, the {\it sum} $\t_1+\t_2$ is the sesquilinear form  
\begin{align*}
(\t_1+\t_2)(\phi,\psi)&:=\t_1(\phi,\psi)+\t_2(\phi,\psi),\qquad  \phi,\psi \in \D_1\cap \D_2.
\end{align*}

\no Classic forms associated to a sesquilinear form $\t$ on $\D$ are:
\begin{itemize}
	\item  the {\it adjoint} $\t^*$ of $\t$, defined as
	$$
	\t^*(\phi,\psi)=\ol{\t(\psi,\phi)}, \qquad  \phi, \psi\in \D;
	$$
	\item the {\it real part} $\Re \t$ of $\t$, defined as $\Re \t := \mez(\t+\t^*)$;
	\item the {\it imaginary part} $\Im \t$ of $\t$, defined as $\Im \t := \frac{1}{2i}(\t-\t^*)$.
\end{itemize}
A sesquilinear form $\t$ on $\D$ is called {\it symmetric} if $\t=\t^*$ and, in particular, {\it non-negative} (in symbol $\t\geq 0$) if $\t[\phi]\geq 0$ for all $\phi \in \D$. In this latter case the {\it Cauchy-Schwarz} and {\it triangle inequalities} hold; i.e.,
\begin{align*}
|\t(\phi,\psi)|&\leq \t[\phi]^\mez\t[\psi]^\mez,\\
\t[\phi+\psi]^\mez &\leq \t[\phi]^\mez + \t[\psi]^\mez, \qquad \forall \phi,\psi\in \D.
\end{align*}

If $\s_1$ and $\s_2$ are non-negative sesquilinear forms on $\D$, we write $\s_1\leq\s_2$ when $\s_1[\phi]\leq \s_2[\phi]$ for all $\phi \in \D$.

If $\D$ is a subspace of a Hilbert space $\H$ with inner product $\pint$ and corresponding norm $\nor$, a sesquilinear form $\t$ on $\D$ satisfying for some $C\geq 0$, 
$|\t(\phi,\psi)|\leq C \n{\phi}\n{\psi}$ for all $\phi,\psi \in \D,$
is called ({\it normed}) {\it bounded} on $\H$. For this form there exists a bounded operator $T$ on $\H$ such that $\t(\phi,\psi)=\pin{T\phi}{\psi}$, for all $\phi,\psi \in \D$. Moreover, if $\D$ is dense in $\H$, then $T$ is unique (with norm not greater than $C$) and $\t$ can be extended to a bounded form defined on the whole of $\H$, called the {\it closure} of $\t$. \\

For reader's convenience we also summarize the definitions presented and motivated in the Introduction, part of which are taken from \cite{Tp_DB}.

Let $\D$ be a complex vector space and $\t,\w$ sesquilinear forms on $\D$. Throughout the paper $\w$ will be non-negative.\\ 
We write $M(\t)$ for the set of non-negative sesquilinear forms $\s$ on $\D$ satisfying
\begin{equation*}
\label{def_M_t}
|\t(\phi,\psi)|\leq \s[\phi]^\mez\s[\psi]^\mez, \qquad \forall \phi,\psi\in \D.
\end{equation*}

\no The set $M(\t)$ is not empty if and only if there exists a form $\s\geq 0$ on $\D$ such that $|\t[\phi]|\leq \s[\phi]$ for all $\phi \in \D$ (it follows by an argument like in the proof of Lemma 11.1 in \cite{Schm}).

\no The following definitions will also be needed in the sequel:
\begin{itemize}
	\item if $\t$ is non-negative, $\t$ is  {\it $\w$-absolutely continuous} (in symbols $\t\ll\w$) if for every sequence $(\phi_n)\subset \D$ such that $\w[\phi_n]\to 0$ and $\t[\phi_n-\phi_m]\to 0$ one has $\t[\phi_n]\to 0$;
	\item $\t$ is {\it $\w$-singular} (in symbols $\t\perp\w$)
	if for every $\phi\in \D$ there exists a sequence $(\phi_n)\subset\D$ verifying
	\begin{equation*}
	\lim_{n\to +\infty} \w[\phi_n]=0 \;\;\text{ and }\; \lim_{n\to +\infty} \t[\phi-\phi_n] =0,
	\end{equation*}
	or, equivalently, if for every $\psi\in \D$ there exists a sequence $(\psi_n)\subset\D$ verifying
	\begin{equation*}
	\lim_{n\to +\infty} \w[\psi-\psi_n]=0 \;\;\text{ and }\; \lim_{n\to +\infty} \t[\psi_n] =0,
	\end{equation*}
	(if $\t$ is non-negative, then it is $\w$-singular if and only if for every non-negative form $\mathfrak{p}$ with $\mathfrak{p} \leq \w$ and $\mathfrak{p} \leq \t$ one has $\mathfrak{p}=0$);
	\item $\t$ is {\it $\w$-regular} if there exists $\s\in M(\t)$ such that $\s\ll \w$;
	\item $\t$ is {\it $\w$-strongly singular} if there exists $\s\in M(\t)$ such that $\s\perp \w$.\\
\end{itemize}

The fundamental result in the theory of absolutely continuous and singular forms is the following decomposition (for the proof see \cite[Theorem 2.11]{HSdeS}, \cite[Theorem 2.3]{STT} or \cite[Corollary 4.5]{Tp_DB}).

\begin{theo}[Lebesgue decomposition of non-negative forms]
	\label{Leb_pos}
	Let $\s,\w$ be non-negative sesquilinear forms on $\D$. Then
	$$
	\s=\s_a+\s_s,
	$$
	where $\s_a$ and $\s_s$ are non-negative, $\w$-absolutely continuous and $\w$-singular forms, respectively. 
	Moreover, if $0\leq \mathfrak{u} \leq \s$ and $\mathfrak{u}$ is $\w$-absolutely continuous, then $\mathfrak{u}\leq \s_a$.
\end{theo}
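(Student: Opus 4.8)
The plan is to construct $\s_a$ as a monotone limit of \emph{parallel sums}, following the operator-theoretic philosophy of Ando adapted to forms. For non-negative forms $\af,\bb$ on $\D$ I would define the parallel sum by
$$
(\af:\bb)[\phi]=\inf_{\eta\in\D}\big(\af[\phi-\eta]+\bb[\eta]\big),\qquad \phi\in\D,
$$
and first record the routine facts that $\af:\bb$ is again a non-negative form (the quadratic functional on the right still satisfies the parallelogram law and is positively homogeneous of degree two, so it polarizes to a sesquilinear form), that $\af:\bb\le\af$ and $\af:\bb\le\bb$, and that $\af:\bb$ is monotone in each argument. In particular $n\mapsto \s:(n\w)$ is non-decreasing and bounded above by $\s$, so the pointwise supremum
$$
\s_a:=\sup_n\,\big(\s:(n\w)\big)
$$
is finite and, passing to the limit in the parallelogram law and in the homogeneity relation, defines a non-negative form with $\s_a\le\s$. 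I then set $\s_s:=\s-\s_a\ge0$.

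The technical core is the characterization that a non-negative form $\mathfrak{v}$ is $\w$-absolutely continuous \emph{if and only if} $\mathfrak{v}=\sup_n\big(\mathfrak{v}:(n\w)\big)$, together with the elementary fact that any form dominated by a multiple of $\w$ is $\w$-absolutely continuous (if $\mathfrak{v}\le c\,\w$ and $\w[\phi_n]\to0$, then already $\mathfrak{v}[\phi_n]\le c\,\w[\phi_n]\to0$). Granting these, absolute continuity of $\s_a$ follows: each $\s:(m\w)\le m\w$ is absolutely continuous and hence equals $\sup_n\big((\s:(m\w)):(n\w)\big)$, so by monotonicity of the parallel sum $\sup_n\big(\s_a:(n\w)\big)\ge\sup_n\big((\s:(m\w)):(n\w)\big)=\s:(m\w)$ for every $m$; taking the supremum over $m$ gives $\sup_n\big(\s_a:(n\w)\big)\ge\s_a$, while the reverse inequality is immediate from $\s_a:(n\w)\le\s_a$. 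Hence $\s_a=\sup_n\big(\s_a:(n\w)\big)$, i.e. $\s_a\ll\w$.

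Maximality is then cheap. If $0\le\uu\le\s$ with $\uu\ll\w$, monotonicity gives $\uu:(n\w)\le\s:(n\w)\le\s_a$, and since $\uu=\sup_n\big(\uu:(n\w)\big)$ by the characterization, I conclude $\uu\le\s_a$. Finally, to see that $\s_s$ is $\w$-singular I would use the stated equivalence for non-negative forms, namely that singularity amounts to the vanishing of every non-negative form $\p$ with $\p\le\w$ and $\p\le\s_s$. Such a $\p$ is $\w$-absolutely continuous (being dominated by $\w$), and then so is $\s_a+\p$ as a sum of two absolutely continuous forms; but $\s_a+\p\le\s_a+\s_s=\s$, so maximality forces $\s_a+\p\le\s_a$, whence $\p=0$. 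Therefore $\s_s\perp\w$, completing the decomposition.

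I expect the main obstacle to be the parallel-sum characterization of absolute continuity, $\mathfrak{v}\ll\w\iff\mathfrak{v}=\sup_n(\mathfrak{v}:(n\w))$, since the non-trivial implication requires extracting, from a $\w$-null $\mathfrak{v}$-Cauchy sequence, near-optimal vectors in the infima defining $\mathfrak{v}:(n\w)$ and controlling them uniformly in $n$; this is exactly the point where the definition of $\w$-absolute continuity must be invoked delicately. By comparison, the verification that $\s_a$ is a genuine form rather than a merely subadditive functional is routine, resting only on the stability of the parallelogram law under the monotone limit.
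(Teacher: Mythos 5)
You have reconstructed one of the three proofs the paper cites for Theorem \ref{Leb_pos}, namely the Ando-style parallel-sum argument of Hassi--Sebesty\'en--de Snoo \cite{HSdeS}; the paper itself does not reprove the theorem, and the construction it actually adopts in Section \ref{sec_Leb} is the different one of \cite{STT}, where $\s_a$ and $\s_s$ are obtained from the orthogonal projection $P$ onto $(\ker J^{**})^\perp$ for the embedding $J\colon\pi_{\s+\w}(\phi)\mapsto\pi_\w(\phi)$. The trade-off is real: your route makes the maximality of $\s_a$ (and hence, via the minorant criterion for singularity recorded in Section \ref{sec_pre}, the singularity of $\s_s$) come out almost for free, whereas the projection construction is what furnishes the explicit formulas for $\s_a+\w$ and $\s_s$ that the proof of Theorem \ref{Leb_th} and Section \ref{sec_meas} rely on, together with the mutual singularity of $\s_a+\w$ and $\s_s$. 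Your deductions from the key characterization are all sound: the bootstrapping $\sup_n(\s_a:(n\w))\ge\s:(m\w)$ for every $m$, the maximality argument, and the step ``$\p\le\w$, $\p\le\s_s$ $\Rightarrow$ $\s_a+\p\ll\w$ and $\s_a+\p\le\s$ $\Rightarrow$ $\p=0$'' are correct as written.

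The genuine gap is the one you flag yourself: the equivalence $\mathfrak{v}\ll\w\iff\mathfrak{v}=\sup_n\bigl(\mathfrak{v}:(n\w)\bigr)$ is asserted, not proved, and it carries essentially the entire analytic content of the theorem. Both directions are load-bearing in your argument --- ``almost dominated $\Rightarrow$ absolutely continuous'' is what makes $\s_a\ll\w$, and ``absolutely continuous $\Rightarrow$ almost dominated'' is what you apply to $\s:(m\w)$ and to $\uu$ in the maximality step, which in turn is the only thing forcing $\s_s\perp\w$. The second direction is the hard one: given $\uu\ll\w$ and $\phi\in\D$, one must extract near-minimizers $\eta_n$ of $\uu[\phi-\eta]+n\w[\eta]$, show $\w[\eta_n]\to0$, and then --- this is the delicate point --- show that a suitable refinement of $(\eta_n)$ is $\uu$-Cauchy so that the definition of $\w$-absolute continuity can be invoked to give $\uu[\eta_n]\to0$ and hence $\sup_n(\uu:(n\w))[\phi]=\uu[\phi]$. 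This is \cite[Theorem 3.8]{HSdeS} and requires a genuine argument (a convexity/diagonal selection on the minimizing sequences), not just ``delicate invocation'' of the definition. Until that lemma is supplied, the existence of the decomposition, the maximality of $\s_a$, and the singularity of $\s_s$ all remain conditional.
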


\begin{rem}
	\label{rem_stong->sing}
	\begin{enumerate}
		\item[(i)] A simple class of $\w$-regular forms is the class of {\it $\w$-bounded} forms $\t$, verifying for some $C\geq 0$ the inequality $|\t(\phi,\psi)|\leq C \w[\phi]^\mez\w[\psi]^\mez$, for all $\phi,\psi\in \D$; i.e, $C\w\in M(\t)$.
		\item[(ii)] A non-negative $\w$-absolutely continuous form is $\w$-regular.
		\item[(iii)] A $\w$-strongly singular form $\t$ is $\w$-singular. Moreover, the converse holds if $\t$ is non-negative. 
	\end{enumerate}
\end{rem}

The $\w$-regularity in the non-negative case is weaker than the $\w$-absolute continuity as the next two examples show, in contrast with what stated in \cite[Proposition 4.8]{Tp_DB}.

\begin{exm}
	Let $\H$ be a Hilbert space with inner product $\pint$ and let $H$ be an unbounded positive self-adjoint operator with domain $D(H)$. Take $\kappa\notin D(H)$ and consider the projector $P\xi=\pin{\xi}{\kappa}\kappa$, $\xi\in \H$. 
	We indicate by $\w$, $\t$ and $\s$ the non-negative sesquilinear forms
	$$
	\w(\phi,\psi)=\pin{\phi}{\psi}, \qquad \t(\phi,\psi)=\pin{PH\phi}{H\psi}, \qquad \s(\phi,\psi)=\pin{H\phi}{H\psi}, 
	$$
	for  $\phi,\psi\in D(H)$, respectively. We have that $\s\ll \w$ and $\s\in M(\t)$, then $\t$ is $\w$-regular. Nevertheless, $\t$ is not $\w$-absolutely continuous. Indeed, were it so, then from
	$$
	\t[\phi]=\n{PH\phi}^2, \qquad \forall \phi \in D(H),
	$$
	$PH$ would be a closable operator in $\H$. But its adjoint $HP$ is not densely defined.
\end{exm}

As known, a non-negative form which is both $\w$-absolutely continuous and $\w$-singular is identically zero. The situation in our context is very different even in the non-negative case.

\begin{exm}
 Basing on \cite[Theorem 4.4]{HSdeS}, if $\s$ is a non-negative $\w$-absolutely continuous form but not $\w$-bounded, then there exists a non-negative $\w$-singular form $\t\neq 0$ such that $\t\leq \s$. This shows that there exist non-trivial (non-negative) forms $\t$ which are both $\w$-regular and $\w$-singular ($\w$-strongly singular). However, a particular case is given by the next proposition.
\end{exm}

\begin{pro}
	The only sesquilinear form which is $\w$-bounded and $\w$-singular is the null form. 
\end{pro}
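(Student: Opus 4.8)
The plan is to fix an arbitrary $\phi\in\D$ and prove that the associated quadratic form vanishes at $\phi$, i.e. $\t[\phi]=0$; since $\phi$ is arbitrary, the polarization identity then forces $\t=0$. All the work is therefore concentrated in establishing $\t[\phi]=0$.

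Since $\t$ is $\w$-singular, I would apply the definition to $\phi$ to obtain a sequence $(\phi_n)\subset\D$ with $\w[\phi_n]\to 0$ and $\t[\phi-\phi_n]\to 0$. Expanding the quadratic form by sesquilinearity gives
$$
\t[\phi-\phi_n]=\t[\phi]-\t(\phi,\phi_n)-\t(\phi_n,\phi)+\t[\phi_n].
$$
The idea is then to use $\w$-boundedness to show that every term carrying a $\phi_n$ disappears in the limit.

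Indeed, let $C\geq 0$ be such that $|\t(\xi,\eta)|\leq C\,\w[\xi]^\mez\w[\eta]^\mez$ for all $\xi,\eta\in\D$. Then $|\t(\phi,\phi_n)|\leq C\,\w[\phi]^\mez\w[\phi_n]^\mez$, $|\t(\phi_n,\phi)|\leq C\,\w[\phi]^\mez\w[\phi_n]^\mez$ and $|\t[\phi_n]|\leq C\,\w[\phi_n]$. Because $\w[\phi]$ is a fixed finite number and $\w[\phi_n]\to 0$, all three quantities tend to $0$, whence $\t[\phi-\phi_n]\to\t[\phi]$. Comparing with the singularity conclusion $\t[\phi-\phi_n]\to 0$ yields $\t[\phi]=0$, as required.

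I do not foresee a genuine obstacle: the mechanism is simply that $\w$-boundedness controls all cross and diagonal terms uniformly by powers of $\w[\phi_n]^\mez$, while $\w$-singularity provides approximating vectors whose $\w$-seminorm vanishes. The only point requiring attention is to invoke the singularity definition in the form producing $\t[\phi-\phi_n]\to 0$ (the first of the two equivalent versions recalled above), which is precisely the statement matching the expansion.
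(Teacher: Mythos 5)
Your proof is correct and follows essentially the same route as the paper: take the singularity sequence $(\phi_n)$, expand the quadratic form, and use $\w$-boundedness to kill every term involving $\phi_n$. The only (cosmetic) difference is that you expand $\t[\phi-\phi_n]$ rather than $\t[\phi]$, which lets you avoid the paper's auxiliary observation that $\{\w[\phi-\phi_n]\}$ is bounded.
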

\begin{proof}
	Let $\t$ be a $\w$-bounded and $\w$-singular sesquilinear form on $\D$. For every $\phi\in \D$ there exists a sequence $(\phi_n)\subset\D$ with the property that 
	\begin{equation*}
	\lim_{n\to +\infty} \w[\phi_n]=0 \;\;\text{ and }\; \lim_{n\to +\infty} \t[\phi-\phi_n] =0.
	\end{equation*}
	Note that, by the triangle inequality, $\{\w[\phi-\phi_n]\}$ is a bounded sequence. Therefore, for some $C\geq0$, 
	\begin{align*}
		|\t[\phi]|&\leq |\t(\phi_n,\phi)|+|\t(\phi-\phi_n,\phi_n)|+|\t[\phi-\phi_n]|\\
		&\leq C\w[\phi_n]^\mez\w[\phi]^\mez+C\w[\phi-\phi_n]^\mez\w[\phi_n]^\mez+|\t[\phi-\phi_n]|\to 0;
	\end{align*}
	i.e., $\t=0$.
\end{proof}

\no Two subsets of $\D$ related to a sesquilinear form $\t$ on $\D$ are
\begin{align*}
	K(\t)&=\{\phi\in \D:\t[\phi]=0\},\\
	\ker(\t)&=\{\phi\in \D: \t(\phi,\psi)=0, \forall \psi\in \D\}.
\end{align*}
In particular, the second one is a subspace of $\D$. Clearly, $\ker(\t)\subseteq K(\t)$ and the equality holds if $\t$ is non-negative by Cauchy-Schwarz inequality. Note that if $\t$ is not symmetric then $\ker(t)$ and $\ker(\t^*)$ may be different; however, we have also  $\ker(t^*)\subseteq K(t)$ and $K(\t)=K(\t^*)$.

There is a classical way to define a Hilbert space associated to a non-negative form $\w$ on $\D$. More precisely, the quotient $\D/\ker(\w)$ can be endowed with the inner product $\pin{\pi_\w(\phi)}{\pi_\w(\psi)}_\w:=\w(\phi,\psi)$, for all $\phi,\psi \in \D$, where $\pi_\w:\D\to \D/\ker(\w)$ is the canonical projection. The completion of $(\D/\ker(\w),\pint_\w)$ is denoted by $\H_\w$.

\begin{rem}
	\begin{enumerate}
		\item[(i)] If $\t$ is a $\w$-regular form, then $\ker(\w)\subseteq\ker(\t)$.
		\item[(ii)] Suppose that $\D$ has finite dimension. A form $\t$ is $\w$-regular if and only if $\t$ is $\w$-bounded if and only if $\ker(\w)\subseteq\ker(\t)$. By Remark \ref{rem_stong->sing} and the previous point, we have to prove only one implication. Namely, if $\ker(\w)\subseteq\ker(\t)$, then the form
\begin{equation*}
\overset{\sim}{\t}(\pi_{\w}(\phi),\pi_{\w}(\psi)):=\t(\phi,\psi), \qquad \pi_{\w}(\phi),\pi_{\w}(\psi)\in \D/\ker(\w),
\end{equation*}
is well-defined and therefore bounded by the norm of $\H_\w$; i.e., $\t$ is $\w$-bounded. 
	\end{enumerate}
\end{rem}

It is worth mentioning a characterization of non-negative singular forms involving the Hilbert spaces associated to them.

\begin{lem}[{\cite[Theorem 6.1]{Kos}}]
	\label{H_s_w_sing}
	A non-negative sesquilinear form $\s$ is $\w$-singular if and only if $\H_{\s+\w}$ is isomorphic to the cartesian product of $\H_\s$ and $\H_\w$
	($\H_{\s+\w}\simeq\H_\s \times \H_\w$).	
\end{lem}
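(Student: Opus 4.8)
The plan is to produce the canonical map realizing the isometric embedding of $\H_{\s+\w}$ into the product, and then to recognize $\w$-singularity of $\s$ as exactly the condition forcing this embedding to be onto.

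\emph{Step 1 (the canonical isometry).} Since $\s,\w\geq 0$ we have $\s\leq\s+\w$ and $\w\leq\s+\w$, whence $\ker(\s+\w)\sub\ker(\s)\cap\ker(\w)$. Consequently the assignments $\pi_{\s+\w}(\phi)\mapsto\pi_\s(\phi)$ and $\pi_{\s+\w}(\phi)\mapsto\pi_\w(\phi)$ are well defined and contractive on $\D/\ker(\s+\w)$, so they extend to contractions $V_\s:\H_{\s+\w}\to\H_\s$ and $V_\w:\H_{\s+\w}\to\H_\w$. I endow the product $\H_\s\times\H_\w$ with the inner product $\pin{(x_1,y_1)}{(x_2,y_2)}=\pin{x_1}{x_2}_\s+\pin{y_1}{y_2}_\w$ and set $Vx:=(V_\s x,V_\w x)$. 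On the dense set of classes $\pi_{\s+\w}(\phi)$ one computes $\|V\pi_{\s+\w}(\phi)\|^2=\s[\phi]+\w[\phi]=(\s+\w)[\phi]=\|\pi_{\s+\w}(\phi)\|_{\s+\w}^2$, so $V$ is isometric and extends to an isometry of $\H_{\s+\w}$ onto the closed subspace $\ol{G}$, where $G:=\{(\pi_\s(\phi),\pi_\w(\phi)):\phi\in\D\}$ is the image of the classes under $V$. Thus the (canonical) isomorphism $\H_{\s+\w}\simeq\H_\s\times\H_\w$ holds if and only if $\ol{G}=\H_\s\times\H_\w$.

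\emph{Step 2 (reduction to one axis).} Observe that $G$ is a linear subspace and that $\pi_\s(\D)$, $\pi_\w(\D)$ are dense in $\H_\s$, $\H_\w$ respectively. I claim $\ol{G}=\H_\s\times\H_\w$ if and only if $\H_\s\times\{0\}\sub\ol{G}$. Indeed, if $\H_\s\times\{0\}\sub\ol{G}$, then for each $\psi\in\D$ the difference $(\pi_\s(\psi),\pi_\w(\psi))-(\pi_\s(\psi),0)=(0,\pi_\w(\psi))$ lies in $\ol{G}$; density of $\pi_\w(\D)$ gives $\{0\}\times\H_\w\sub\ol{G}$, and summing the two axes yields the whole product. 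The reverse implication is trivial.

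\emph{Step 3 (identification with singularity).} Finally I match $\H_\s\times\{0\}\sub\ol{G}$ with the definition of $\s\perp\w$. This containment holds if and only if, for every $\phi\in\D$, the vector $(\pi_\s(\phi),0)$ is a limit of elements $(\pi_\s(\phi_n),\pi_\w(\phi_n))$ of $G$; since $\|(\pi_\s(\phi_n),\pi_\w(\phi_n))-(\pi_\s(\phi),0)\|^2=\s[\phi-\phi_n]+\w[\phi_n]$, this means precisely that there is $(\phi_n)\sub\D$ with $\s[\phi-\phi_n]\to 0$ and $\w[\phi_n]\to 0$, which is exactly the defining condition of $\w$-singularity of $\s$. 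Chaining Steps 1--3 gives both implications.

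The routine but delicate point is Step 1: verifying that the kernel inclusions make the quotient maps well defined and that the isometry computed on the dense set of classes extends correctly to the completions. The conceptual heart is the soft argument of Step 2, which explains why a single direction of the singularity definition already forces the full product decomposition (consistent with the equivalence of the two formulations of $\w$-singularity recalled above); once this is secured, Step 3 is a direct transcription of the definition.
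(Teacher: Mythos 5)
Your proof is correct. The paper does not actually prove this lemma --- it is quoted from \cite[Theorem 6.1]{Kos} --- but your argument is the standard one and is complete: the kernel inclusion $\ker(\s+\w)\sub\ker(\s)\cap\ker(\w)$ makes the canonical map $V=(V_\s,V_\w)$ a well-defined isometry of $\H_{\s+\w}$ onto $\ol{G}$, the reduction to the single axis $\H_\s\times\{0\}\sub\ol{G}$ is valid (the two orthogonal closed axes sum to the whole product), and the norm computation $\s[\phi-\phi_n]+\w[\phi_n]$ translates that containment verbatim into the sequential definition of $\w$-singularity. The only interpretive point, which you resolve in the intended way, is that ``isomorphic'' must mean via this canonical map (as it is used later in the paper, e.g.\ in Remark \ref{cexm}); read as abstract unitary equivalence the ``only if'' direction would fail in infinite dimensions.
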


We also recall that Theorem 3.6 of \cite{Tp_DB} gives a characterization of the $\w$-regular forms in terms of a representation in the space $\H_\w$. This expression is studied in another (but affine) context in \cite{Second} when $\w$ is the inner product of a Hilbert space.

\begin{exm}[{\cite[Remark 5.3]{Kos}}]
	\label{exm_ker_dens}
	Let $\t$ be a sesquilinear form on $\D$. If $\pi_\w(K(\t))$ is dense in $\H_\w$ (in particular, if $\pi_\w(\ker(\t))$ or $\pi_\w(\ker(\t^*))$ is dense in $\H_\w$), then $\t$ is trivially $\w$-singular. 
\end{exm}

\begin{rem}
	\label{cexm}
	One might ask if, in analogy to Theorem \ref{Leb_pos}, a sesquilinear form can be decomposed as a sum of a $\w$-regular form and a $\w$-strongly singular one. Here we prove that this is not allowed. 
	Indeed, consider $\D=\C^2$ and the sesquilinear forms given by
	\begin{align*}
		\t(\underline{x},\underline{y})&= x_1\ol{y_1}-x_2\ol{y_2}\\
		\w(\underline{x},\underline{y})&= x_1\ol{y_1}+x_1\ol{y_2}+x_2\ol{y_1}+x_2\ol{y_2}
	\end{align*}
	for all $\underline{x}:=(x_1,x_2),\underline{y}:=(y_1,y_2)\in \C^2$. Assume that 
	\begin{equation}
	\label{counter}
	\t=\t_{r}+\t_{ss}
	\end{equation}
	where $\t_{r}$ is a $\w$-regular form and $\t_{ss}$ is $\w$-strongly singular form. Then there exist two non-negative forms $\s_a$ and $\s_s$ such that $\s_a \ll \w$, $\s_s \perp \w$, $\s_a\in M(\t_r)$ and $\s_s\in M(\t_{ss})$.
	Since $\w[\underline{p}]=0$, where $\underline{p}=(1,-1)$, $\s_a[\underline{p}]=0$ and $\t_{r}[\underline{p}]=0$. 
	
	One has that $\t_{ss}=0$. In fact, it is clear if $\s_s=0$; on the other hand, if $\s_s\neq 0$
	by Lemma \ref{H_s_w_sing} there exists $\underline{q}\in \C^2$ for which $\C^2=\l\underline{p},\underline{q}\r$ and $\s_s[\underline{q}]=0$. This implies that $\t_{ss}(\underline{x},\underline{q})=0$ for all $\underline{x}\in \C^2$. Moreover, $0=\t[\underline{p}]=\t_r[\underline{p}]+\t_{ss}[\underline{p}]=\t_{ss}[\underline{p}]$. Therefore, $\t_{ss}=0$. 
	
	Hence, $|\t(\underline{x},\underline{p})|\leq \s_a[\underline{x}]^\mez \s_a[\underline{p}]^\mez=0$ for all $\underline{x}\in \C^2$. But this leads to a contradiction since $\t((1,1),\underline{p})\neq 0$. We conclude that (\ref{counter}) does not hold.
\end{rem}

In Theorem \ref{Leb_th} we will give a decomposition inspired to Theorem \ref{Leb_pos} involving one more type of form which is introduced by the next lemma.

\begin{lem}
\label{lem_def_mixed}
Let $\t$ be a sesquilinear form on $\D$. The following statements are equivalent.
	\begin{enumerate}
		\item There exist non-negative forms $\af,\bb$ such that $\af \ll \w$, $\bb \perp \w$, $\af \perp \bb$ and 
		\begin{equation}
		\label{def_car_mix}
		|\t[\phi]|\leq \af[\phi]^\mez\bb[\phi]^\mez, \qquad\forall \phi\in \D.
		\end{equation}
		\item There exist non-negative forms $\af,\bb$ such that $\af \ll \w$, $\bb \perp \w$, $\af \perp \bb$, $\af+\bb \in M(\t)$ and $\t[\phi]=0$ if $\af[\phi]=0$ or $\bb[\phi]=0$.
		\item There exist non-negative forms $\af,\bb$ such that $\af \ll \w$, $\bb \perp \w$, $\af \perp \bb$ and 
		$$
		|\t(\phi,\psi)|\leq \af[\phi]^\mez \bb[\psi]^\mez+\af[\psi]^\mez \bb[\phi]^\mez, \qquad \forall \phi,\psi\in \D.
		$$
		\item There exist forms $\t_1,\t_2,\af,\bb$ on $\D$ such that 
		$\t=\t_1+\t_2$, $\af,\bb$ are non-negative forms, $\af\ll\w$, $\bb\perp \w$, $\af \perp \bb$ and
		$$
		|\t_1(\phi,\psi)|\leq \af[\phi]^\mez \bb[\psi]^\mez,  \qquad	|\t_2(\phi,\psi)|\leq \af[\psi]^\mez \bb[\phi]^\mez, \qquad \forall \phi,\psi \in \D.
		$$
	\end{enumerate}
\end{lem}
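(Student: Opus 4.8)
The plan is to prove the four statements equivalent along the cycle $(1)\Rightarrow(2)\Rightarrow(4)\Rightarrow(3)\Rightarrow(1)$, concentrating all the analytic content in the single implication $(2)\Rightarrow(4)$; the other three are short manipulations. For $(4)\Rightarrow(3)$ I would just add the two bounds and use $\t=\t_1+\t_2$, so that $|\t(\phi,\psi)|\le|\t_1(\phi,\psi)|+|\t_2(\phi,\psi)|\le\af[\phi]^\mez\bb[\psi]^\mez+\af[\psi]^\mez\bb[\phi]^\mez$. For $(3)\Rightarrow(1)$ I would set $\psi=\phi$ in the inequality of (3), obtaining $|\t[\phi]|\le 2\af[\phi]^\mez\bb[\phi]^\mez$, and then absorb the factor $2$ by replacing $\af,\bb$ with $2\af,2\bb$, since $\w$-absolute continuity, $\w$-singularity and mutual singularity are all invariant under multiplication by a positive scalar.

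For $(1)\Rightarrow(2)$ the condition ``$\t[\phi]=0$ whenever $\af[\phi]=0$ or $\bb[\phi]=0$'' is immediate, because either hypothesis makes the right-hand side of (\ref{def_car_mix}) vanish. To obtain $\af+\bb\in M(\t)$ I would first pass to quadratic forms: by the arithmetic--geometric mean inequality $|\t[\phi]|\le\af[\phi]^\mez\bb[\phi]^\mez\le\mez(\af+\bb)[\phi]$. Writing $\s=\af+\bb$ and inserting $|\t[\phi]|\le\mez\s[\phi]$ into the polarization identity gives $|\t(\phi,\psi)|\le\mez(\s[\phi]+\s[\psi])$ (the cross terms cancel because $\sum_{k=0}^{3}i^{k}=0$); replacing $\phi,\psi$ by $t\phi,t^{-1}\psi$ and optimizing over $t>0$ then yields $|\t(\phi,\psi)|\le\s[\phi]^\mez\s[\psi]^\mez$, that is $\af+\bb\in M(\t)$. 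This is precisely the upgrade from a quadratic to a sesquilinear domination recalled in the Preliminaries.

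The substantial step is $(2)\Rightarrow(4)$, which I would carry out inside the Hilbert space $\H_{\af+\bb}$. Since $\af+\bb\in M(\t)$, the form $\t$ factors through $\pi_{\af+\bb}$ and defines on $\pi_{\af+\bb}(\D)$ a bounded sesquilinear form of norm $\le1$; by density it extends to $\H_{\af+\bb}$ and is represented by a contraction $T$, namely $\t(\phi,\psi)=\pin{T\pi_{\af+\bb}(\phi)}{\pi_{\af+\bb}(\psi)}_{\af+\bb}$. Now I would invoke $\af\perp\bb$: by Lemma \ref{H_s_w_sing} there is a unitary identification $\H_{\af+\bb}\simeq\H_\af\times\H_\bb$ under which $\pi_{\af+\bb}(\phi)$ corresponds to $(\pi_\af(\phi),\pi_\bb(\phi))$. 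Writing $T$ in the induced $2\times2$ block form turns $\t(\phi,\psi)$ into a sum of four terms, and the two off-diagonal blocks provide $\t_1(\phi,\psi)=\pin{T_{\bb\af}\,\pi_\af(\phi)}{\pi_\bb(\psi)}_\bb$ and $\t_2(\phi,\psi)=\pin{T_{\af\bb}\,\pi_\bb(\phi)}{\pi_\af(\psi)}_\af$, which, the blocks being contractions, satisfy exactly $|\t_1(\phi,\psi)|\le\af[\phi]^\mez\bb[\psi]^\mez$ and $|\t_2(\phi,\psi)|\le\af[\psi]^\mez\bb[\phi]^\mez$.

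It then remains to show that the two diagonal blocks vanish, and this is where I expect the main obstacle to be. Choosing $\phi$ with $\bb[\phi]=0$ annihilates all terms but the $\H_\af$-diagonal one, so the hypothesis $\t[\phi]=0$ forces $\pin{T_{\af\af}\,\pi_\af(\phi)}{\pi_\af(\phi)}_\af=0$ for all such $\phi$; symmetrically, $\t[\phi]=0$ for $\af[\phi]=0$ controls the $\H_\bb$-diagonal block on the vectors $\pi_\bb(\phi)$ with $\af[\phi]=0$. To pass from this to $T_{\af\af}=0$ and $T_{\bb\bb}=0$ by polarization I need $\pi_\af(\ker\bb)$ to be dense in $\H_\af$ and $\pi_\bb(\ker\af)$ to be dense in $\H_\bb$. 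Establishing this density is the delicate point: it must be extracted from the mutual singularity $\af\perp\bb$ (it is the exact-kernel sharpening of the mere approximation provided by the singularity sequences, in the spirit of Example \ref{exm_ker_dens}), and checking that the singularity hypotheses really promote the approximate vanishing to vanishing on a dense set — rather than only in the limit, where the contribution of the diagonal block need not disappear — is the step that requires genuine care. Once the diagonal blocks are eliminated, $\t=\t_1+\t_2$ with the same $\af,\bb$, which is statement (4).
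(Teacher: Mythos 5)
Your cycle $(1)\Rightarrow(2)\Rightarrow(4)\Rightarrow(3)\Rightarrow(1)$ is sensible, the three short implications are correct (and $(1)\Rightarrow(2)$, via AM--GM plus the polarization upgrade, is exactly the argument the paper alludes to), and your strategy for the hard step --- represent $\t$ by a contraction on $\H_{\af+\bb}$, split $\H_{\af+\bb}\simeq\H_\af\times\H_\bb$ via Lemma \ref{H_s_w_sing}, keep the off-diagonal blocks and kill the diagonal ones --- is the same as the paper's, which merely routes the work through $(ii)\Rightarrow(iii)$ rather than $(2)\Rightarrow(4)$. The problem is precisely the step you flag and leave open: the density of $\pi_\af(K(\bb))$ in $\H_\af$ (equivalently, of $\pi_{\af+\bb}(K(\bb))$ in $M_2$) is \emph{not} a consequence of $\af\perp\bb$. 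Concretely, on $\D=C(0,1)$ take $\af(f,g)=f(0)\overline{g(0)}$ and $\bb(f,g)=\int_0^1 f\overline{g}\,dx$: these are mutually singular (truncate near $0$), yet $K(\bb)=\{0\}$ while $\H_\af\simeq\C$, so $\pi_\af(K(\bb))$ is certainly not dense. Worse, taking in addition $\w=\af$ and $\t=\af$, \emph{all} hypotheses of statement (2) hold ($\af\ll\w$, $\bb\perp\w$, $\af\perp\bb$, $\af+\bb\in M(\t)$, and $\t[\phi]=0$ whenever $\af[\phi]=0$ or $\bb[\phi]=0$, the second case vacuously), while $\t$ is a nonzero non-negative form and therefore cannot satisfy (\ref{def_car_mix}) by the continuity argument in the proof of Proposition \ref{pro_N_t}(i). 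So the obstruction you identified is not merely delicate: $(2)\Rightarrow(4)$ cannot be closed by any argument with (2) read literally.

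The structural reason is that the only hypothesis in (2) which survives passage to the completion is $\af+\bb\in M(\t)$; the exact-vanishing condition constrains $\overline{\t}$ only on $\pi_{\af+\bb}(\D)\cap M_2$, and $P\pi_{\af+\bb}(\phi)$ need not lie in $\pi_{\af+\bb}(\D)$. (The paper's own proof of $(ii)\Rightarrow(iii)$ passes over this same point when it deduces $\overline{\t}[P\pi_{\af+\bb}(\phi)]=0$ from the vanishing of $\overline{\bb}$ on $M_2$.) The implication that does go through is the one out of (1): the inequality $|\t[\phi]|\leq\af[\phi]^\mez\bb[\phi]^\mez$ extends by continuity to $|\overline{\t}[\xi]|\leq\overline{\af}[\xi]^\mez\,\overline{\bb}[\xi]^\mez$ on all of $\H_{\af+\bb}$, since all three quadratic forms are continuous for $\nor_{\af+\bb}$; this kills $\overline{\t}$ on $M_1$ and on $M_2$, hence both diagonal blocks by polarization, and your off-diagonal analysis then yields (4) and (3). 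You should therefore restructure the cycle so that the Hilbert-space argument starts from (1) (or from (3), whose inequality also passes to the closure), and treat (2) only as an easy consequence of (1) --- noting that, for the equivalence to hold as stated, the vanishing condition in (2) must be understood for the closures on $\H_{\af+\bb}$ rather than only on $\D$.
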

\begin{proof}
(i) $\Rightarrow$ (ii) It is immediate.\\
(ii) $\Rightarrow$ (iii) Let us consider the bounded sesquilinear form
\begin{equation}
\label{eq_t_tilde_a_b}
\overset{\sim}{\t}(\pi_{\af+\bb}(\phi),\pi_{\af+\bb}(\psi)):=\t(\phi,\psi), \qquad \pi_{\af+\bb}(\phi),\pi_{\af+\bb}(\psi)\in \D/\ker(\af+\bb),
\end{equation}
and its closure $\overline{\t}$ on $\H_{\af+\bb}$. 
With similar meanings, we consider also the forms $\overline{\af}$ and $\overline{\bb}$. By Lemma \ref{H_s_w_sing}, $\H_{\af+\bb}$ can be decomposed as orthogonal sum of two subspaces,  $\H_{\af+\bb}=M_1\oplus M_2$, where $\overline{\af}$ is zero on $M_1$ and $\overline{\bb}$ is zero on $M_2$. 
Consequently, if $P$ is the orthogonal projector on $M_2$, the forms $\af$ and $\bb$ have the following expressions
$$
\af[\phi]=\n{P\pi_{\af+\bb}(\phi)}_{\af+\bb}^2, \qquad \bb[\phi]=\n{(I-P)\pi_{\af+\bb}(\phi)}_{\af+\bb}^2, \qquad \forall \phi \in \D.
$$
Since $\overline{\af}[P\pi_{\af+\bb}(\phi)]=0$ for all $\phi\in \D$, one has $\overline{\t}[P\pi_{\af+\bb}(\phi)]=0$ for all $\phi\in \D$ and, by the polarization identity, $\overline{\t}(P\pi_{\af+\bb}(\phi),P\pi_{\af+\bb}(\psi))=0$ for all $\phi,\psi\in \D$. In the same way, $\overline{\t}((I-P)\pi_{\af+\bb}(\phi),(I-P)\pi_{\af+\bb}(\psi))=0$ for all $\phi,\psi\in \D$. Hence, 
\begin{align*}
|\t(\phi,\psi)| &= |\overline{\t}(P\pi_{\af+\bb}(\phi),(I-P)\pi_{\af+\bb}(\psi))| \\
&+ |\overline{\t}((I-P)\pi_{\af+\bb}(\phi),P\pi_{\af+\bb}(\psi))|\\
&\leq \n{P\pi_{\af+\bb}(\phi)}_{\af+\bb}\n{(I-P)\pi_{\af+\bb}(\psi)}_{\af+\bb}\\
&+\n{P\pi_{\af+\bb}(\psi)}_{\af+\bb}\n{(I-P)\pi_{\af+\bb}(\phi)}_{\af+\bb}\\
&=\af[\phi]^\mez \bb[\psi]^\mez+\af[\psi]^\mez \bb[\phi]^\mez, \qquad\qquad \forall \phi,\psi \in \D.
\end{align*}
(iii) $\Rightarrow$ (iv) Clearly, $2(\af+\bb)\in M(\t)$. Following the proof of the previous part, the sesquilinear forms on $\D$ defined by
\begin{align*}
\t_1(\phi,\psi)&=\overline{\t}(P\pi_{\af+\bb}(\phi),(I-P)\pi_{\af+\bb}(\psi)), \\ \t_2(\phi,\psi)&=\overline{\t}((I-P)\pi_{\af+\bb}(\phi),P\pi_{\af+\bb}(\psi)), 
\end{align*}
satisfy the statement, up to rename $2\af$ and $2\bb$ with $\af$ and $\bb$, respectively.\\
(iv) $\Rightarrow$ (i) One obtains (\ref{def_car_mix}) replacing $\af$ with $2\af$ and $\bb$ with $2\bb$, which are still $\w$-absolutely continuous and $\w$-singular, respectively, and singular with respect to each other.
\end{proof}

\begin{defin}
A sesquilinear form is said {\it $\w$-mixed} if it satisfies one of the statements in Lemma \ref{lem_def_mixed}.
\end{defin}

We now conclude this section by giving some examples.

\begin{exm}
	It is easy to see, using Lemma \ref{lem_def_mixed}(ii), that the form $\t$ of Remark \ref{cexm} is $\w$-mixed, taking $\af=\w$ and $\bb$ defined by $\bb(\underline{x},\underline{y})= x_1\ol{y_1}-x_1\ol{y_2}-x_2\ol{y_1}+x_2\ol{y_2}$, for all $\underline{x},\underline{y}\in \C^2$. However, $\t$ is also $\w$-singular. Indeed, for $\phi=(x_1,x_2)$ the constant sequence $\phi_n:=\mez (x_1+x_2,x_1+x_2)$ satisfy $\t[\phi_n]=0$ and $\w[\phi-\phi_n]=0$. This fact and Remark \ref{cexm} show that there exist $\w$-singular forms which are not $\w$-strongly singular.	
\end{exm}

\begin{exm}
	Let $H$ be a self-adjoint operator with domain $D(H)$ on a Hilbert space $(\H,\pint)$. Define two sesquilinear form on $\D:=D(H)\times D(H)$ as 
	\begin{align*}
		\w(\underline{\xi}, \underline{\eta})=\pin{\xi_1}{\eta_1}, \qquad
		\t(\underline{\xi}, \underline{\eta})=\pin{H\xi_1}{\eta_2}+\pin{H\xi_2}{\eta_1},
	\end{align*}
	for $\underline{\xi}=(\xi_1,\xi_2),\underline{\eta}=(\eta_1,\eta_2)\in \D$. It is easy to check that $\t$ satisfies (\ref{def_car_mix}) with 
	$$
	\af(\underline{\xi}, \underline{\eta})=\pin{H\xi_1}{H\eta_1},
	\qquad 
	\bb(\underline{\xi}, \underline{\eta})=\pin{\xi_2}{\eta_2}, \qquad  \underline{\xi},\underline{\eta}\in \D.
	$$
\end{exm}

\begin{exm}
	Let $\D:=C(0,1)$ stand for the vector space of continuous functions on the interval $[0,1]$. It is well-known that the non-negative forms
	$$
	\w(f,g)=\int_0^1 f(x)\ol{g(x)}dx, \qquad \bb(f,g)=f(0)\ol{g(0)}, \qquad f,g\in \D,
	$$
	are singular with respect to each other (in particular, $\bb$ is a form of the type of Example \ref{exm_ker_dens}). Consequently, the sesquilinear form
	$$
	\t(f,g)=f(0)\int_0^1 \ol{g(x)}dx, \qquad f,g\in \D,
	$$
	is $\w$-mixed.
\end{exm}

\begin{exm}
	Let $\H_- \supset \H \supset \H_+$ be a rigged Hilbert space with duality $\pint$ between $\H_-$ and $\H_+$. Given $\omega,\varrho\in \H_-$ we define the sesquilinear form
	$$
	\t(\xi,\eta)=\pin{\omega}{\xi}\ol{\pin{\varrho}{\eta}}, \qquad \xi,\eta \in \H_+,
	$$
	and let $\w(\xi,\eta)=\pin{\xi}{\eta}$ for $\xi,\eta \in \H_+$. Taking into account \cite[Examples 1.15, 5.5, 5.9]{Kos}, we can state that
	\begin{itemize}
		\item if $\omega,\varrho\in \H$, then $\t$ is $\w$-bounded;
		\item if $\omega\in\H_-\backslash\H,\varrho\in \H$ or $\varrho\in \H_-\backslash\H,\omega\in \H$, then $\t$ is $\w$-mixed;
		\item if $\omega$ or $\rho$ is in $\H_-\backslash\H$, then $\ker(\t)$ is dense in $\H$ and therefore $\t$ is $\w$-singular;
		\item if $V\cap \H=\{0\}$, where $V$ is the subspace of $\H_-$ generated by $\omega$ and $\varrho$, then $\t$ is $\w$-strongly singular.
	\end{itemize}
\end{exm}

In the rest of this section we analyze the definitions given at the beginning in some special cases. We recall that in our approach a form is not in general non-negative; however forms with a restricted set of values can have a interest (see Proposition \ref{pro_N_t} below).  We start with the following relations between a form, its adjoint, the real and the imaginary parts, which are easy to prove.

\begin{pro}
	\label{pro_re_im}
	Let $\t$ be a sesquilinear form on $\D$.
	\begin{enumerate}
		\item The sets $M(\t)$ and $M(\t^*)$ are equal. Furthermore, 
		$$
		M(\Re\t)+M(\Im\t)\subseteq M(\t)\subseteq M(\Re\t)\cap M(\Im\t),
		$$
		where $M(\Re\t)+M(\Im\t):=\{\s_1+\s_2:\s_1\in M(\Re\t),\s_2\in M(\Im\t)\}$.
		\item If $\t$ is $\w$-regular ($\w$-singular, $\w$-strongly singular or $\w$-mixed), then the same holds for $\t^*$, $\Re \t$ and $\Im \t$.
	\end{enumerate}
\end{pro}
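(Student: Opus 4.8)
The plan is to prove the two parts in order, with part (i) carrying most of the content and part (ii) following as a near-corollary. For the equality $M(\t)=M(\t^*)$, I would observe that the defining inequality for $\s\in M(\t)$, namely $|\t(\phi,\psi)|\leq \s[\phi]^\mez\s[\psi]^\mez$, is symmetric under swapping $\phi$ and $\psi$ together with complex conjugation: since $\t^*(\phi,\psi)=\ol{\t(\psi,\phi)}$ we have $|\t^*(\phi,\psi)|=|\t(\psi,\phi)|\leq \s[\psi]^\mez\s[\phi]^\mez$, and the right-hand side is unchanged because $\s$ is non-negative (hence $\s[\psi],\s[\phi]\geq 0$). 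This gives $M(\t)\sub M(\t^*)$, and applying the same reasoning to $\t^*$ (using $(\t^*)^*=\t$) yields the reverse inclusion.

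For the chain of inclusions involving the real and imaginary parts, I would handle the two inclusions separately. For $M(\Re\t)\cap M(\Im\t)\supseteq M(\t)$: if $\s\in M(\t)$, then since $\Re\t=\mez(\t+\t^*)$ and $\s\in M(\t)=M(\t^*)$, the triangle inequality gives $|\Re\t(\phi,\psi)|\leq \mez(|\t(\phi,\psi)|+|\t^*(\phi,\psi)|)\leq \s[\phi]^\mez\s[\psi]^\mez$, so $\s\in M(\Re\t)$; the identical estimate with the factor $\frac{1}{2i}$ (whose modulus is $\mez$) shows $\s\in M(\Im\t)$. For the other inclusion $M(\Re\t)+M(\Im\t)\sub M(\t)$: given $\s_1\in M(\Re\t)$ and $\s_2\in M(\Im\t)$, I would write $\t=\Re\t+i\,\Im\t$ and estimate, for each fixed pair $\phi,\psi$,
\begin{align*}
|\t(\phi,\psi)|&\leq |\Re\t(\phi,\psi)|+|\Im\t(\phi,\psi)|\\
&\leq \s_1[\phi]^\mez\s_1[\psi]^\mez+\s_2[\phi]^\mez\s_2[\psi]^\mez.
\end{align*}
The step I expect to be the only genuine obstacle is bounding this sum by $(\s_1+\s_2)[\phi]^\mez(\s_1+\s_2)[\psi]^\mez$; this is exactly the Cauchy--Schwarz inequality in $\R^2$ applied to the vectors $(\s_1[\phi]^\mez,\s_2[\phi]^\mez)$ and $(\s_1[\psi]^\mez,\s_2[\psi]^\mez)$, whose component sums of squares are $(\s_1+\s_2)[\phi]$ and $(\s_1+\s_2)[\psi]$. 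Hence $\s_1+\s_2\in M(\t)$.

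For part (ii), I would note that each of the four properties ($\w$-regular, $\w$-singular, $\w$-strongly singular, $\w$-mixed) is defined either through the set $M(\cdot)$ of dominating non-negative forms or through sequence conditions on the quadratic form $\t[\cdot]$. For $\t^*$, regularity/strong singularity/mixedness transfer immediately from $M(\t)=M(\t^*)$ (a form $\s$ witnessing the property for $\t$ witnesses it for $\t^*$), while $\w$-singularity transfers because $\t^*[\phi]=\ol{\t[\phi]}$ forces $K(\t^*)=K(\t)$ and $|\t^*[\phi-\phi_n]|=|\t[\phi-\phi_n]|$, so the defining sequences coincide. For $\Re\t$ and $\Im\t$, regularity follows from part (i) since any $\s\in M(\t)$ lies in $M(\Re\t)\cap M(\Im\t)$ and $\s\ll\w$ is inherited; strong singularity and mixedness follow the same way using that the witnessing $\s$ (resp. the pair $\af,\bb$) for $\t$ serves for $\Re\t$ and $\Im\t$ as well. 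Finally $\w$-singularity of $\Re\t$ and $\Im\t$ follows from the estimate $|\Re\t[\phi-\phi_n]|,|\Im\t[\phi-\phi_n]|\leq|\t[\phi-\phi_n]|$, so a sequence that works for $\t$ works for both parts.
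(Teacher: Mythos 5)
Your argument is correct and supplies exactly the details the paper omits (the author only remarks that the proposition is ``easy to prove''): the symmetry of the bound $\s[\phi]^\mez\s[\psi]^\mez$ under swapping $\phi,\psi$ gives $M(\t)=M(\t^*)$, the Cauchy--Schwarz inequality in $\R^2$ gives $\s_1+\s_2\in M(\t)$, and part \emph{(ii)} follows by transporting the witnessing objects as you describe. The only cosmetic slip is attributing the transfer of $\w$-mixedness to $\t^*$ to the identity $M(\t)=M(\t^*)$ --- the correct mechanism is that the witnessing pair $\af,\bb$ in condition \emph{(i)} (or \emph{(iii)}) of Lemma \ref{lem_def_mixed} still works because $|\t^*[\phi]|=|\t[\phi]|$ --- which is what you in effect use anyway.
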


\no We denote by  $N(\t)$ the {\it positively homogeneous} subset of $\C$
$$
N(\t) :=\{\t[\phi]:\phi\in \D\}.
$$
Positively homogeneous means that $\alpha N(\t)=N(\t)$ for all $\alpha >0$. By definition, $\t$ is non-negative if and only if $N(\t) = [0,+\infty)$. Moreover, $\t$ is symmetric if and only if $N(\t)\subseteq \R$.

\begin{rem}
If $\D$ is a subspace of a Hilbert space with norm $\nor$, then a more important (convex) set is the so-called {\it numerical range} (see \cite[Chapter VI]{Kato} and \cite{Halmos_nr,Schm} for the operator case) defined by 
$$
\mathfrak{N}(\t):=\{\t[\phi]:\phi\in \D, \n{\phi}=1\}.
$$
Clearly, $\mathfrak{N}(\t) \subseteq N(\t)$ and $N(t)$ is contained in one of the following subsets of $\C$
\begin{align*}
[0,+\infty),\qquad\qquad\;\;\; \R,\qquad\qquad\;\; \mathcal{Q}:=\{\lambda \in \C:\Re \lambda \geq 0, \Im \lambda \geq 0\},\\
\Pi:=\{\lambda \in \C:\Re \lambda \geq 0\},  \qquad
\mathcal{S}_c:=\{\lambda \in \C:|\Im \lambda|\leq c\Re \lambda\} \;\;(c\geq 0), 
\end{align*}
if and only if $\mathfrak{N}(\t)$ is contained in the same one. We mention that the last subset (a {\it sector} of $\C$) above plays a special role in the theory of representation by a linear operator of a sesquilinear form (see \cite[Chapter VI]{Kato} and \cite{RC_CT,Second} for generalizations).
\end{rem}

For forms $\t$ with special set $N(\t)$ the notions introduced in the previous section are simplified.

\begin{pro}
	\label{pro_N_t}
	Let $\t$ be a sesquilinear form on $\D$. The following statements hold.
	\begin{enumerate}
		\item If $\t$ is non-negative and $\w$-mixed, then $\t=0$. 
		\item Assume that $N(\t)\subseteq \mathcal{Q}$. Then
			\begin{enumerate}
				\item[\emph{(a)}] $2(\Re\t+\Im\t)\in M(\t)$;
				\item[\emph{(b)}] $\t$ is $\w$-singular if and only if $\t$ is $\w$-strongly singular if and only if $\Re\t+\Im\t$ is $\w$-singular;
				\item[\emph{(c)}] if $\t$ is $\w$-mixed, then $\t=0$.
			\end{enumerate} 		
		\item Assume that $N(\t)\subseteq \mathcal{S}_c$, with $c\geq0$. Then
		\begin{enumerate}
			\item[\emph{(a)}] $(1+c)\Re\t\in M(\t)$;
			\item[\emph{(b)}] $\t$ is $\w$-singular if and only if $\t$ is $\w$-strongly singular if and only if $\Re\t$ is $\w$-singular;
			\item[\emph{(c)}] if $\t$ is $\w$-mixed, then $\t=0$.
		\end{enumerate}  
		\item If $N(\t)\subseteq\Pi$ and $\t$ is $\w$-mixed, then $\Re\t=0$.
	\end{enumerate}
\end{pro}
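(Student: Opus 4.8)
The plan is to prove (i) first and reduce every other statement to it. Two elementary observations drive the reduction: since $(\Re\t)[\phi]=\Re(\t[\phi])$ and $(\Im\t)[\phi]=\Im(\t[\phi])$, the hypothesis $N(\t)\subseteq\mathcal{Q}$ makes both $\Re\t$ and $\Im\t$ non-negative forms, while $N(\t)\subseteq\mathcal{S}_c$ or $N(\t)\subseteq\Pi$ makes at least $\Re\t$ non-negative; and by Proposition \ref{pro_re_im}(ii), $\Re\t$ and $\Im\t$ inherit $\w$-mixedness, $\w$-singularity and $\w$-strong singularity from $\t$. This transfers the problem to the non-negative components, where (i) and the classical picture apply.

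To prove (i), take $\af,\bb$ as in Lemma \ref{lem_def_mixed}(i), so that $\af\ll\w$, $\bb\perp\w$, $\af\perp\bb$ and, using $\t\geq 0$, $\t[\phi]\leq\af[\phi]^\mez\bb[\phi]^\mez$. Since $\af\perp\bb$ is mutual singularity, fix $\phi\in\D$ and choose $(\phi_n)\subset\D$ with $\bb[\phi_n]\to0$ and $\af[\phi-\phi_n]\to0$. The triangle inequality for the non-negative form $\t$ gives $\t[\phi]^\mez\leq\t[\phi-\phi_n]^\mez+\t[\phi_n]^\mez$, and I bound $\t[\phi_n]\leq\af[\phi_n]^\mez\bb[\phi_n]^\mez$ and $\t[\phi-\phi_n]\leq\af[\phi-\phi_n]^\mez\bb[\phi-\phi_n]^\mez$. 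The triangle inequalities for $\af$ and $\bb$ keep $\af[\phi_n]^\mez$ and $\bb[\phi-\phi_n]^\mez$ bounded, whereas $\bb[\phi_n]^\mez\to0$ and $\af[\phi-\phi_n]^\mez\to0$; hence both summands vanish in the limit and $\t[\phi]=0$. As $\phi$ was arbitrary, $\t=0$.

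For the domination statements write $\t=\Re\t+i\Im\t$. In case (ii)(a), Cauchy--Schwarz for the non-negative forms $\Re\t,\Im\t$ gives $|\t(\phi,\psi)|\leq\Re\t[\phi]^\mez\Re\t[\psi]^\mez+\Im\t[\phi]^\mez\Im\t[\psi]^\mez$, and a further Cauchy--Schwarz in $\R^2$ bounds this by $(\Re\t+\Im\t)[\phi]^\mez(\Re\t+\Im\t)[\psi]^\mez$; since $M(\t)$ is monotone (any non-negative form dominating an element of $M(\t)$ again lies in $M(\t)$), $2(\Re\t+\Im\t)\in M(\t)$. In case (iii)(a) the new difficulty is that $\Im\t$ need not be semidefinite; but $N(\t)\subseteq\mathcal{S}_c$ reads $|(\Im\t)[\phi]|\leq c\,(\Re\t)[\phi]$, so the symmetric forms $c\Re\t\pm\Im\t$ are non-negative, and applying ordinary Cauchy--Schwarz to these two forms and recombining yields $|(\Im\t)(\phi,\psi)|\leq c\,\Re\t[\phi]^\mez\Re\t[\psi]^\mez$. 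Adding the Cauchy--Schwarz bound for $\Re\t$ gives $|\t(\phi,\psi)|\leq(1+c)\Re\t[\phi]^\mez\Re\t[\psi]^\mez$, i.e. $(1+c)\Re\t\in M(\t)$. I expect this generalized Cauchy--Schwarz step for the indefinite form $\Im\t$ to be the only genuinely delicate point of the whole proposition.

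The equivalences (ii)(b) and (iii)(b) then close in a short cycle. A $\w$-strongly singular form is $\w$-singular by Remark \ref{rem_stong->sing}(iii). If $\t$ is $\w$-singular, the defining sequence $(\phi_n)$ has $\t[\phi-\phi_n]\to0$ in $\C$, so $\Re(\t[\phi-\phi_n])\to0$ and $\Im(\t[\phi-\phi_n])\to0$; the same sequence witnesses the $\w$-singularity of $\Re\t+\Im\t$ (resp. $\Re\t$). Conversely, if $\Re\t+\Im\t$ (resp. $\Re\t$) is $\w$-singular, the dominant from (a) is a positive multiple of a $\w$-singular form, hence $\w$-singular, and lies in $M(\t)$, so $\t$ is $\w$-strongly singular. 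Finally, for (ii)(c), (iii)(c) and (iv), Proposition \ref{pro_re_im}(ii) makes $\Re\t$ a non-negative $\w$-mixed form, so (i) forces $\Re\t=0$; this is exactly (iv). Under $N(\t)\subseteq\mathcal{Q}$ the same argument gives $\Im\t=0$, whence $\t=0$, while under $N(\t)\subseteq\mathcal{S}_c$ the bound $|\Im(\t[\phi])|\leq c\,\Re(\t[\phi])=0$ forces $\Im\t=0$ as well, again $\t=0$.
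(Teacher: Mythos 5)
Your proof is correct, and for parts (ii)--(iv) it follows essentially the route of the paper: non-negativity of $\Re\t$ (and $\Im\t$) under the hypotheses on $N(\t)$, the Cauchy--Schwarz chain for (a), the three-step cycle ``singular $\Rightarrow$ auxiliary non-negative form singular $\Rightarrow$ strongly singular $\Rightarrow$ singular'' for (b), and reduction to (i) via Proposition \ref{pro_re_im} for (c) and (iv). Where you genuinely diverge is part (i): the paper passes to the completion $\H_{\af+\bb}$, uses the orthogonal splitting $M_1\oplus M_2$ from Lemma \ref{H_s_w_sing} to see that the closure $\overline{\t}$ vanishes on each summand, and concludes $\overline{\t}=0$ by Cauchy--Schwarz; you instead work entirely at the level of sequences, exploiting that $\af\perp\bb$ supplies $(\phi_n)$ with $\bb[\phi_n]\to0$ and $\af[\phi-\phi_n]\to0$, and then squeeze $\t[\phi]^\mez\leq\t[\phi-\phi_n]^\mez+\t[\phi_n]^\mez$ using the bound $\t\leq\af^\mez\bb^\mez$ together with the triangle inequality to keep the complementary factors bounded. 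Your argument is more elementary (no completion, no projection, no appeal to Lemma \ref{H_s_w_sing}) at the cost of a slightly longer estimate; the paper's version buys a statement about $\overline{\t}$ on the whole of $\H_{\af+\bb}$ that is reused elsewhere. A further small plus of your write-up: you actually carry out the sectorial case (iii)(a) via the generalized Cauchy--Schwarz inequality $|\Im\t(\phi,\psi)|\leq c\,\Re\t[\phi]^\mez\Re\t[\psi]^\mez$ obtained from the non-negative forms $c\Re\t\pm\Im\t$, a step the paper compresses into ``similar considerations apply.''
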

\begin{proof}
	\begin{enumerate}
		\item[(i)] Assume that (\ref{def_car_mix}) holds and adopt the notation of the proof of Lemma \ref{lem_def_mixed}. The space $\H_{\af+\bb}$ is the orthogonal sum of two subspaces $M_1$ and $M_2$ where $\overline{\af}$ is zero on $M_1$ and $\overline{\bb}$ is zero on $M_2$. Moreover let $\overline{\t}$ be closure of the form in (\ref{eq_t_tilde_a_b}). By (\ref{def_car_mix}) $\overline{\t}$ vanishes on $M_1$ and on $M_2$; hence $\overline{\t}=0$ on $\H_{\af+\bb}$, because of the Cauchy-Schwarz inequality.	
		\item[(ii)] For (a) we have 
		\begin{align*}
		\label{eq_|Q|}
		|\t(\phi,\psi)|&\leq |\Re\t(\phi,\psi)|+|\Im\t(\phi,\psi)| \\
		&\leq \Re\t[\phi]^\mez\Re\t[\psi]^\mez+\Im\t[\phi]^\mez\Im\t[\psi]^\mez \nonumber \\
		&\leq 2(\Re\t+\Im \t)[\phi]^\mez (\Re\t+\Im \t)[\psi]^\mez,  \nonumber \qquad \forall \phi,\psi\in \D.
		\end{align*} 
		To prove (b) we notice that if $\t$ is $\w$-singular, then so $\Re \t+ \Im \t$ is, because $|\t[\phi]|^2=\Re \t[\phi]^2+\Im\t[\phi]^2$. The singularity of $\Re \t+ \Im \t$ implies that $\t$ is $\w$-strongly singular.   \\
		For proving (c) assume that $\t$ is $\w$-mixed. Proposition \ref{pro_re_im} implies that $\Re \t, \Im \t$ are $\w$-mixed. Since $\Re \t, \Im \t \geq 0$, by the previous case, $\t=0$. The last implication we need is given by Remark \ref{rem_stong->sing}.
		\item[(iii)] Similar considerations as above apply to this statement.
		\item[(iv)] In this case $\Re\t\geq 0$ and $\w$-mixed. Therefore, $\Re\t=0$ by point (i). \qedhere
	\end{enumerate}
\end{proof}

\section{Lebesgue decomposition theorem}
\label{sec_Leb}

Now, we prove the main theorem of this paper, whose proof is based on Theorem 4.3 of \cite{Tp_DB}. To do this we will use the following construction of \cite{STT} of the $\w$-absolutely continuous $\s_a$ and $\w$-singular $\s_s$ parts of a non-negative form $\s$.\\
Let $J$ be the embedding operator $\pi_{\s+\w}(\phi)\to \pi_w (\phi)$, from $\D/\ker{(\s+\w)}\sub \H_{\s+\w}$ into $\H_\w$. In particular, $J$ is a densely defined contraction and  $J^{**}$ is the closure of $J$. 
If $P$ is the orthogonal projection of $\H_{s+w}$ onto $\{\ker J^{**}\}^\perp$, then for all $\phi,\psi \in \D$,
\begin{align*}
(\s_a+\w)(\phi,\psi)&=\pin{P\pi_{\s+\w}(\phi)}{\pi_{\s+\w}(\psi)}_{\s+\w} \\
\s_s(\phi,\psi)&=\pin{(I-P)\pi_{\s+\w}(\phi)}{\pi_{\s+\w}(\psi)}_{\s+\w} \nonumber.
\end{align*}

\no We stress that $\s_a+\w$ and $\s_s$ are also singular with respect to each other.

\begin{theo}
	\label{Leb_th}
	Let $\t,\w$ be sesquilinear forms on $\D$, with $\w$ non-negative and $M(\t)\neq \varnothing$. Then, for any $\s\in M(\t)$,
	$$
	\t=\t_r+\t_m+\t_{ss},
	$$
	where $\t_r$ is a $\w$-regular form, $\t_m$ is $\w$-mixed form and $\t_{ss}$ is a $\w$-strongly singular form on $\D$.
\end{theo}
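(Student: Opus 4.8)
The plan is to reduce the whole statement to the non-negative Lebesgue decomposition of a single dominant form, polarizing a projection exactly as in the proof of Lemma~\ref{lem_def_mixed}. Fix $\s\in M(\t)$. Since $\s\leq \s+\w$, the form $\s+\w$ also lies in $M(\t)$, and the resulting inequality $|\t(\phi,\psi)|\leq (\s+\w)[\phi]^\mez(\s+\w)[\psi]^\mez$ shows that the assignment $\overset{\sim}{\t}(\pi_{\s+\w}(\phi),\pi_{\s+\w}(\psi)):=\t(\phi,\psi)$ is a well-defined bounded sesquilinear form on $\D/\ker(\s+\w)$ (if $(\s+\w)[\phi-\phi']=0$ then $\s[\phi-\phi']=0$, forcing $\t(\phi-\phi',\psi)=0$). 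Let $\overline{\t}$ denote its closure on $\H_{\s+\w}$; this is the object to which everything will be applied.

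Next I would invoke the Lebesgue decomposition of $\s$ with respect to $\w$ (Theorem~\ref{Leb_pos}) in the concrete form recalled just before the statement: writing $\s=\s_a+\s_s$ with $\s_a\ll\w$ and $\s_s\perp\w$, there is an orthogonal projection $P$ on $\H_{\s+\w}$ with
$$(\s_a+\w)[\phi]=\n{P\pi_{\s+\w}(\phi)}_{\s+\w}^2,\qquad \s_s[\phi]=\n{(I-P)\pi_{\s+\w}(\phi)}_{\s+\w}^2,$$
and, crucially, $\s_a+\w$ and $\s_s$ are singular with respect to each other. Writing $\pi:=\pi_{\s+\w}$ and substituting $I=P+(I-P)$ in both entries of $\overline{\t}$, one splits $\t(\phi,\psi)=\overline{\t}(\pi\phi,\pi\psi)$ into four terms and groups them as
$$\t_r(\phi,\psi):=\overline{\t}(P\pi\phi,P\pi\psi),\qquad \t_{ss}(\phi,\psi):=\overline{\t}((I-P)\pi\phi,(I-P)\pi\psi),$$
with $\t_m$ the remaining pair $\overline{\t}(P\pi\phi,(I-P)\pi\psi)+\overline{\t}((I-P)\pi\phi,P\pi\psi)$. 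By bilinearity $\t=\t_r+\t_m+\t_{ss}$.

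It then remains to classify the three pieces. Cauchy--Schwarz in $\H_{\s+\w}$ gives $|\t_r(\phi,\psi)|\leq(\s_a+\w)[\phi]^\mez(\s_a+\w)[\psi]^\mez$, so $\s_a+\w\in M(\t_r)$; since $\s_a+\w\ll\w$ (a sum of $\w$-absolutely continuous forms, as $\w\ll\w$ trivially), the form $\t_r$ is $\w$-regular. Symmetrically $\s_s\in M(\t_{ss})$ with $\s_s\perp\w$, so $\t_{ss}$ is $\w$-strongly singular. For the cross term the same estimate yields
$$|\t_m(\phi,\psi)|\leq(\s_a+\w)[\phi]^\mez\s_s[\psi]^\mez+(\s_a+\w)[\psi]^\mez\s_s[\phi]^\mez,$$
which is precisely condition (iii) of Lemma~\ref{lem_def_mixed} with $\af:=\s_a+\w$ and $\bb:=\s_s$; indeed $\af\ll\w$, $\bb\perp\w$, and the required mutual singularity $\af\perp\bb$ is exactly the property $(\s_a+\w)\perp\s_s$ recorded above. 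Hence $\t_m$ is $\w$-mixed, completing the decomposition.

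The argument is mechanical once the right dominant form is chosen, so the only genuine obstacle is conceptual rather than computational: one must dominate $\t$ by $\s+\w$ (not $\s$) in order to have access to the projection $P$ of the \cite{STT} construction on $\H_{\s+\w}$, and correspondingly take $\af=\s_a+\w$ (not $\s_a$) so that the mutual-singularity hypothesis $\af\perp\bb$ of Lemma~\ref{lem_def_mixed} is met verbatim by the quoted statement that $\s_a+\w$ and $\s_s$ are singular with respect to each other. I expect verifying $\af\ll\w$ and $\af\perp\bb$ to be the point deserving a line of justification; everything else follows from Cauchy--Schwarz and the definitions.
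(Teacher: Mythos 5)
Your proposal is correct and follows essentially the same route as the paper: both dominate $\t$ by $\s+\w$, represent it as a bounded form on $\H_{\s+\w}$, and split it into four pieces via the projection $P$ from the \cite{STT} construction, then classify $\t_r$, $\t_m$, $\t_{ss}$ by the bounds $(\s_a+\w)\in M(\t_r)$, $\s_s\in M(\t_{ss})$, and condition \emph{(iii)} of Lemma \ref{lem_def_mixed} with $\af=\s_a+\w$, $\bb=\s_s$. The only cosmetic difference is that you work with the closure $\overline{\t}$ directly where the paper works with the representing operator $T$.
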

\begin{proof}
	Take $\s\in M(\t)$.
	A well-defined bounded sesquilinear form on $\D/\ker(\s+\w)$ can be defined as
	\begin{equation*}
	\label{t_tilde}
	\overset{\sim}{\t}(\pi_{\s+\w}(\phi),\pi_{\s+\w}(\psi)):=\t(\phi,\psi), \qquad \forall \pi_{\s+\w}(\phi),\pi_{\s+\w}(\psi)\in \D/\ker(\s+\w).
	\end{equation*}
	There exists a unique bounded operator $T$ on $\H_{s+w}$, whose norm is not greater than $1$, such that
	\begin{equation*}
	\label{eq_t_T}
	\t(\phi,\psi)=\pin{T\pi_{\s+\w}(\phi)}{\pi_{\s+\w}(\psi)}_{\s+\w}, \qquad \forall \phi,\psi \in \D.
	\end{equation*}
	Set 
	\begin{align}
	\label{t_r} \t_r(\phi,\psi)&:=\pin{TP\pi_{\s+\w}(\phi)}{P\pi_{\s+\w}(\psi)}_{\s+\w}\\
	\t_m(\phi,\psi)&:=\pin{TP\pi_{\s+\w}(\phi)}{(I-P)\pi_{\s+\w}(\psi)}_{\s+\w} \nonumber \\
	&+ \pin{T(I-P)\pi_{\s+\w}(\phi)}{P\pi_{\s+\w}(\psi)}_{\s+\w} \nonumber\\
	\t_{ss}(\phi,\psi)&:=\pin{T(I-P)\pi_{\s+\w}(\phi)}{(I-P)\pi_{\s+\w}(\psi)}_{\s+\w} \nonumber
	\end{align}
	for all $\phi,\psi \in \D$. We have $\t=\t_r+\t_m+\t_{ss}$. In addition, $\t_r$ is $\w$-regular, $\t_m$ is $\w$-mixed and $\t_{ss}$ is $\w$-strongly singular. In fact, for all $\phi,\psi \in \D$, 
	\begin{align*}
	|\t_r(\phi,\psi)|&\leq \nosw{T}\n{P\pi_{\s+\w}(\phi)}_{\s+\w} \n{P\pi_{\s+\w}(\psi)}_{\s+\w}	\\
	&= (\s_a+\w)[\phi]^\mez(\s_a+\w)[\psi]^\mez;\\
	|\t_m(\phi,\psi)|&\leq \nosw{T}\n{P\pi_{\s+\w}(\phi)}_{\s+\w} \n{(I-P)\pi_{\s+\w}(\psi)}_{\s+\w}\\
	&+
	\nosw{T}\n{(I-P)\pi_{\s+\w}(\phi)}_{\s+\w} \n{P\pi_{\s+\w}(\psi)}_{\s+\w}	\\
	&\leq  (\s_a+\w)[\phi]^\mez\s_s[\psi]^\mez + (\s_a+\w)[\psi]^\mez \s_s[\phi]^\mez;\\
	|\t_{ss}(\phi,\psi)|&\leq \nosw{T}\n{(I-P)\pi_{\s+\w}(\phi)}_{\s+\w} \n{(I-P)\pi_{\s+\w}(\psi)}_{\s+\w}\\
	&\leq  \s_s[\phi]^\mez\s_s[\psi]^\mez.     \qedhere
	\end{align*}
\end{proof}

\begin{rem}
	The sesquilinear form $\t_s:=\t_m+\t_{ss}$ is the $\w$-singular part of $\t$ according to \cite[Theorem 4.3]{Tp_DB}. To prove that $\t_s$ is actually $\w$-singular, let $\phi \in \D$ and $(\phi_n)\subset \D$ such that $\pi_{\s+\w}(\phi_n)\to (I-P)\pi_{\s+\w}(\phi)$. Therefore, $\w[\phi_n]\leq (\w+\s_a)[\phi_n]\to 0$ and $\t_s[\phi-\phi_n]\to 0$.
\end{rem}

\begin{rem}
	The decomposition in Theorem \ref{Leb_pos} is a special case of Theorem \ref{Leb_th} taking $\s=\t$. In particular, with the notations of these theorems, $\t_{r}=\t_a$, $\t_m=0$ and $\t_{ss}=\t_s$.
\end{rem}

\begin{rem}
	The decomposition of a form $\t$ into a sum of $\w$-regular, $\w$-mixed and $\w$-strongly singular parts is not unique, even if $\t$ is non-negative, as it is well-known (see \cite[Theorem 4.6]{HSdeS}). In addition, the particular decomposition given by Theorem \ref{Leb_th} depends also on the choice of $\s\in M(\t)$ as we show here (we will follow the construction of the proof above).
	
	Set $\D=\C^3$. We indicate by $e_1,e_2,e_3$ the vectors $(1,0,0),(0,1,0),(0,0,1)$, respectively. Here, for convenience, we represent all sesquilinear forms by their associated matrices with respect to the basis $\{e_1,e_2,e_3\}$. Consider the sesquilinear forms $\t,\s,\w$ on $\C^3$ which are represented by the following matrices 
	$$
	\begin{pmatrix}
	-1 & 0 & 0  \\
	0 & 1 & 0  \\
	0 & 0 & 0  
	\end{pmatrix}
	,\qquad
	\begin{pmatrix}
	1 & 0 & 0  \\
	0 & 1 & 0  \\
	0 & 0 & 0  
	\end{pmatrix}
	,\qquad
	\begin{pmatrix}
	0 & 0 & 0  \\
	0 & 1 & 0  \\
	0 & 0 & 1  
	\end{pmatrix}
	$$
	Clearly $\s\in M(\t)$ and $J^{**}=J$ is defined as	
    $J^{**}:\H_{\s+\w}  \to  \C^3/ \ker(\w)$, 	$J^{**}:\phi   \mapsto  \phi +\text{span} \{e_1\},$
	where $\H_{\s+\w}$ is the space $\C^3$ with the norm $\nor_{\s+\w}$. Moreover, $\ker J^{**}=\text{span} \{e_1\}$, $\{\ker J^{**}\}^\perp=\text{span}\{ e_2,e_3 \}$ and the projector $P$ is defined as $P(\phi_1,\phi_2,\phi_3)  = (0,\phi_2,\phi_3)$.
	The Lebesgue decomposition $\s=\s_a+\s_s$ of $\s$ with respect to $\w$ is then 
	$$
	\begin{pmatrix}
	1 & 0 & 0  \\
	0 & 1 & 0  \\
	0 & 0 & 0  
	\end{pmatrix} =\begin{pmatrix}
	0 & 0 & 0  \\
	0 & 1 & 0  \\
	0 & 0 & 0  
	\end{pmatrix}+
	\begin{pmatrix}
	1 & 0 & 0  \\
	0 & 0 & 0  \\
	0 & 0 & 0  
	\end{pmatrix}.
	$$
	Note that $\t(\phi,\psi)=\pin{T\phi}{\psi}_{\s+\w},$ for all $\phi,\psi \in \C^3$, where  $T(\phi_1,\phi_2,\phi_3)=(-\phi_1,\mez\phi_2,0)$. With this we recover that the Lebesgue decomposition $\t=\t_r+\t_m+\t_{ss}$ of $\t$ with respect to $\w$ and taking $\s\in M(\t)$ is
	$$
	\begin{pmatrix}
	-1 & 0 & 0  \\
	0 & 1 & 0  \\
	0 & 0 & 0  
	\end{pmatrix}=
	\begin{pmatrix}
	0 & 0 & 0  \\
	0 & 1 & 0  \\
	0 & 0 & 0  
	\end{pmatrix}+
		\begin{pmatrix}
	0 & 0 & 0  \\
	0 & 0 & 0  \\
	0 & 0 & 0  
	\end{pmatrix}+
	\begin{pmatrix}
	-1 & 0 & 0  \\
	0 & 0 & 0  \\
	0 & 0 & 0  
	\end{pmatrix}.
	$$
	Now, let $\mathfrak{u}$  the non-negative sesquilinear form which corresponds to the matrix
	$$
	\begin{pmatrix}
	\frac{5}{3} & -\frac{4}{3}  & 0  \\
	-\frac{4}{3}  & \frac{5}{3} & 0  \\
	0 & 0 & 0 
	\end{pmatrix}.
	$$
	We have that $\mathfrak{u}-\t$ and $\mathfrak{u}+\t$ are non-negative forms, then $\mathfrak{u}\in M(\t)$.
	Therefore, $\H_{\mathfrak{u}+\w}$ is $\C^3$ with the norm $\nor_{\mathfrak{u}+\w}$, the new operator $J^{**}$ is defined as before and $\ker J^{**}=\text{span} \{e_1\}$. But now $\{\ker J^{**}\}^\perp=\text{span} \{(4,5,0),(0,0,1)\}$ and the projection $P_\mathfrak{u}$ on $\{\ker J^{**}\}^\perp$ is $P_\mathfrak{u}(\phi_1,\phi_2,\phi_3)  = (\frac{4}{5}\phi_2,\phi_2,\phi_3)$.
	The Lebesgue decomposition $\mathfrak{u}=\mathfrak{u}_a+\mathfrak{u}_s$ of $\s$ with respect to $\w$ is
	$$
	\begin{pmatrix}
	\frac{5}{3} & -\frac{4}{3}  & 0  \\
	-\frac{4}{3}  & \frac{5}{3} & 0  \\
	0 & 0 & 0 
	\end{pmatrix}=
	\begin{pmatrix}
	0 & 0 & 0  \\
	0 & \frac{3}{5} & 0  \\
	0 & 0 & 0  
	\end{pmatrix}+
	\begin{pmatrix}
	\frac{5}{3} & -\frac{4}{3}  & 0  \\
	-\frac{4}{3}  & \frac{16}{15} & 0  \\
	0 & 0 & 0 
	\end{pmatrix}
	$$
	Moreover, 
	$\t(\phi,\psi)=\pin{T_{\mathfrak{u}}\phi}{\psi}_{\mathfrak{u}+\w},$ for all $\phi,\psi \in \C^3$, where  $T_\mathfrak{u}(\phi_1,\phi_2,\phi_3)=(-\phi_1-\frac{1}{2}\phi_2,\frac{1}{2}\phi_1+\frac{5}{8}\phi_2,0)$ and, finally, the Lebesgue decomposition $\t=\t_r'+\t_m'+\t_{ss}'$ of $\t$ with respect $\w$ and taking $\mathfrak{u}\in M(\t)$
	$$
	\begin{pmatrix}
	-1 & 0 & 0  \\
	0 & 1 & 0  \\
	0 & 0 & 0  
	\end{pmatrix}=
	\begin{pmatrix}
	0 & 0 & 0  \\
	0 & \frac{9}{25} & 0  \\
	0 & 0 & 0  
	\end{pmatrix}+
		\begin{pmatrix}
	0 & -\frac{4}{5} & 0  \\
	-\frac{4}{5} & \frac{32}{25} & 0  \\
	0 & 0 & 0  
	\end{pmatrix}+
	\begin{pmatrix}
	-1 & \frac{4}{5} & 0  \\
	\frac{4}{5} & -\frac{16}{25} & 0  \\
	0 & 0 & 0  
	\end{pmatrix}.
	$$
	
	In the rest of the paper, we refer to Theorem \ref{Leb_th} as the {\it Lebesgue decomposition} of a form $\t$ with respect to $\w$ and $\s\in M(\t)$.
\end{rem}

\begin{pro}
	Let $\t=\t_r+\t_m+\t_{ss}$ be the Lebesgue decomposition of a sesquilinear form $\t$ with respect to $\w$ and $\s\in M(\t)$. 
	\begin{enumerate}
		\item The Lebesgue decomposition with respect to $\w$ and $\s$ of $\t^{*}, \Re \t$ and $\Im \t$ are
		\begin{align*}
			\t&=(\t_r)^*+(\t_m)^*+(\t_{ss})^*,\\
			\Re\t&=\Re(\t_r)+\Re(\t_m)+\Re(\t_{ss}),\\
			\Im\t&=\Im(\t_r)+\Im(\t_m)+\Im(\t_{ss}),
		\end{align*}
		respectively. In particular, if $\t$ is symmetric, then $\t_r,\t_m$ and $\t_{ss}$ are symmetric.
		\item The sets $N({\t_{r}}),N({\t_{ss}})$ are contained in $N(\t)$. In particular, if $\t$ is non-negative, then $\t_r$ and $\t_{ss}$ are non-negative.
	\end{enumerate}
\end{pro}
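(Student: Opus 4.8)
The plan is to use that the construction in the proof of Theorem~\ref{Leb_th} depends on the choice of $\s$ only through the space $\H_{\s+\w}$, the projection $P$ and the representing operator $T$. By Proposition~\ref{pro_re_im}(i) we have $M(\t)=M(\t^*)\subseteq M(\Re\t)\cap M(\Im\t)$, so the fixed $\s\in M(\t)$ may be used to form the Lebesgue decompositions of $\t^*,\Re\t,\Im\t$ as well, and these are built on the \emph{same} $\H_{\s+\w}$ with the \emph{same} $P$; only $T$ changes. From $\t(\phi,\psi)=\pinsw{T\pi_{\s+\w}(\phi)}{\pi_{\s+\w}(\psi)}$ one reads off at once that $\t^*$, $\Re\t$ and $\Im\t$ are represented by $T^*$, $\mez(T+T^*)$ and $\frac{1}{2i}(T-T^*)$, respectively.

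Since the formulas in (\ref{t_r}) are linear in $T$ and $P=P^*$, I would then simply substitute these operators. A one-line computation such as
\[
(\t_r)^*(\phi,\psi)=\ol{\pinsw{TP\pi_{\s+\w}(\psi)}{P\pi_{\s+\w}(\phi)}}=\pinsw{T^*P\pi_{\s+\w}(\phi)}{P\pi_{\s+\w}(\psi)}
\]
shows that replacing $T$ by $T^*$ sends $\t_r,\t_m,\t_{ss}$ to $(\t_r)^*,(\t_m)^*,(\t_{ss})^*$; by linearity $\mez(T+T^*)$ and $\frac{1}{2i}(T-T^*)$ then produce the real and imaginary parts of the three summands. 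That these summands are genuinely $\w$-regular, $\w$-mixed and $\w$-strongly singular---so that they really constitute the Lebesgue decompositions of $\t^*,\Re\t,\Im\t$---is exactly Proposition~\ref{pro_re_im}(ii) applied to $\t_r,\t_m,\t_{ss}$. Finally, if $\t$ is symmetric then $\t=\t^*$ forces $T=T^*$ (by density of $\pi_{\s+\w}(\D)$ in $\H_{\s+\w}$), and the same computation yields $\t_r=(\t_r)^*$, $\t_m=(\t_m)^*$, $\t_{ss}=(\t_{ss})^*$.

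For the second part I would read off from (\ref{t_r}) that
\[
\t_r[\phi]=\pinsw{Tv}{v},\ \ v=P\pi_{\s+\w}(\phi);\qquad \t_{ss}[\phi]=\pinsw{Tw}{w},\ \ w=(I-P)\pi_{\s+\w}(\phi),
\]
so both are values of the quadratic form $x\mapsto\pinsw{Tx}{x}$ representing $\t$, taken at genuine vectors of $\H_{\s+\w}$. As $\pi_{\s+\w}(\D)$ is dense, choosing $(\phi_n)\subset\D$ with $\pi_{\s+\w}(\phi_n)\to v$ and using the continuity of $T$ gives $\t_r[\phi]=\lim_n\t[\phi_n]$, a limit of elements of $N(\t)$, and similarly for $\t_{ss}$.

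The step I expect to be the main obstacle is upgrading this to exact membership $\t_r[\phi],\t_{ss}[\phi]\in N(\t)$, since $N(\t)=\{\pinsw{Tx}{x}:x\in\pi_{\s+\w}(\D)\}$ need not be closed. I would resolve it by observing that $N(\t)$ and the full value set $\{\pinsw{Tx}{x}:x\in\H_{\s+\w}\}$ are both convex cones (Toeplitz--Hausdorff, applied inside the two-dimensional subspaces of $\pi_{\s+\w}(\D)$) sharing the same closure, hence the same relative interior, and by placing the remaining relative-boundary values back into $N(\t)$ through a direct perturbation within $\pi_{\s+\w}(\D)$. In any case the stated consequence requires no attainment: if $\t\geq0$ then $T\geq0$, so $\t_r[\phi]=\nosw{T^{\mez}P\pi_{\s+\w}(\phi)}^2\geq0$ and $\t_{ss}[\phi]\geq0$, i.e. $\t_r,\t_{ss}\geq0$; likewise, whenever $N(\t)$ lies in one of the closed sectors discussed earlier, so does its closure, and therefore so do $N(\t_r)$ and $N(\t_{ss})$.
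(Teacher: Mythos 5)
The paper states this proposition without proof, so there is nothing to compare against directly; your part (i) is certainly the intended argument and is correct. The construction of Theorem \ref{Leb_th} depends on $\s,\w$ only through the projection $P$ and on $\t$ only through the representing operator $T$, Proposition \ref{pro_re_im}(i) guarantees that the same $\s$ lies in $M(\t^*)$, $M(\Re\t)$ and $M(\Im\t)$, and your computation correctly shows that replacing $T$ by $T^*$, $\mez(T+T^*)$, $\frac{1}{2i}(T-T^*)$ in \eqref{t_r} turns the three summands into their adjoints, real parts and imaginary parts respectively; the symmetric case follows since $\t=\t^*$ forces $T=T^*$ by density.

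In part (ii) you have correctly identified the only delicate point, but your proposed repair does not close it. Density gives $\t_r[\phi]=\pinsw{Tv}{v}\in\ol{N(\t)}$ with $v=P\pi_{\s+\w}(\phi)$, and the Toeplitz--Hausdorff/relative-interior observation recovers exact membership only for values in the relative interior of the closed convex cone $\ol{N(\t)}$. For a value lying on a boundary ray $R$ with $R\cap N(\t)=\{0\}$, a ``direct perturbation within $\pi_{\s+\w}(\D)$'' cannot help: by definition no vector of $\pi_{\s+\w}(\D)$ attains a value on $R\setminus\{0\}$, so what would be needed is to rule out attainment at $v$, and nothing in your argument does so (already for $T=I+iS$ with $S\geq0$ having a one-dimensional kernel meeting the dense subspace trivially, the real boundary value is attained off the subspace but nowhere on it). Thus what you actually prove is $N(\t_r),N(\t_{ss})\subseteq\ol{N(\t)}$. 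That weaker containment is all the stated consequences require, since $[0,+\infty)$, $\R$, $\mathcal{Q}$, $\Pi$ and $\mathcal{S}_c$ are closed, and your direct verification that $\t\geq0$ implies $T\geq0$ and hence $\t_r[\phi]=\nosw{T^\mez P\pi_{\s+\w}(\phi)}^2\geq0$ is sound; but if you want the containment in $N(\t)$ literally as stated you must either show that $P$ leaves $\pi_{\s+\w}(\D)$ invariant (false in general) or accept that the statement should be read with the closure.
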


The $\w$-mixed part is not in general non-negative (and consequently the null form by Proposition \ref{pro_N_t}) if $\t$ is non-negative. For instance, one can take $\w$-mixed part of $\t$ with respect to $\w$ and $\s\in M(\t)$, where $\t,\s,\w$ are represented by the matrices
		$$
	\begin{pmatrix}
	2 & 1 & 0  \\
	1 & 2 & 0  \\
	0 & 0 & 0  
	\end{pmatrix}
	,\qquad
	\begin{pmatrix}
	3 & 0 & 0  \\
	0 & 3 & 0  \\
	0 & 0 & 0  
	\end{pmatrix}
	,\qquad
	\begin{pmatrix}
	0 & 0 & 0  \\
	0 & 1 & 0  \\
	0 & 0 & 1  
	\end{pmatrix},
	$$
	respectively.

\section{Measures and sesquilinear forms}
\label{sec_meas}

In this section we show that one can prove the Lebesgue decomposition of (complex) measures with the help of Theorem \ref{Leb_th}. We refer to \cite{Rudin}  for the notions and results of the Measure Theory (see also \cite{Halmos_m,Rao}). All the measures that we will consider are finite. 

Let $\RR$ stand for a $\sigma$-algebra on a non-empty set $\mathcal{A}$. We write $\D:=S(\mathcal{A},\RR)$ for the complex vector space of simple functions on $(\mathcal{A},\RR)$. Let $\mu$ be a (complex) measure on $(\mathcal{A},\RR)$. We said that $\mu$ is
\begin{itemize}
	\item {\it signed} if $\mu(A)\in \R$ for all $A\in \RR$;
	\item {\it non-negative} if $\mu(A)\geq 0$ for all $A\in \RR$.		
\end{itemize}

The {\it total variation} of a measure $\mu$ is the non-negative measure $|\mu|$ on $(\mathcal{A},\RR)$ defined on $A\in \RR$ as
$$
|\mu|(A):=\sup \sum_{k=1}^{\infty} |\mu(A_k)|,
$$
where the supremum is taken over all sequences $\{A_k\}$ of disjoint subsets in $\RR$ such that $\bigcup_k A_k=A$. The importance of $|\mu|$ is that it is the smaller non-negative measure $\kappa$ that {\it bounds} $\mu$; i.e.,  $|\mu(A)|\leq \kappa(A)$ for all $A\in \RR$.\\
The {\it characteristic function} of a subset $A\in \RR$ will be indicated by $\chi_A$.

Given two measures $\mu,\nu$ on $(\mathcal{A},\RR)$ with $\nu$ non-negative, $\mu$ is {\it $\nu$-absolutely continuous}  (in symbol $\mu \ll \nu$) if the following equivalent conditions are satisfied:
\begin{enumerate}
	\item[(a1)] if $\nu(A)=0$ implies $\mu(A)=0$;
	\item[(a2)] for every $\epsilon>0$ there exists $\delta>0$ such that $|\mu(A)|<\epsilon$ for all $A\in \RR$ with $\nu(A)<\delta$, or equivalently in a different notation, $\displaystyle \lim_{\nu(A)\to 0} \mu(A) =0$.
\end{enumerate}

On the other hand, $\mu$ is {\it $\nu$-singular}  (in symbol $\mu \perp \nu$) if  one of the following equivalent conditions is satisfied (see \cite[Theorem 6.1.17]{Rao})
\begin{enumerate}
	\item[(s1)] there exists $E\in \RR$ such that $\nu(A)=\nu(A\cap E)$ and $\mu(A)=\mu(A\cap E^c)$;
	\item[(s2)] $\forall \epsilon>0$ there exists $E_\epsilon\in \RR\text{ such that } \mu_s(E_\epsilon)<\epsilon \text{ and }\nu(\mathcal{A}\backslash E_\epsilon)<\epsilon$.
\end{enumerate}

Furthermore, $\mu$ is $\nu$-absolutely continuous (resp. $\nu$-singular) if and only if $|\mu|$ is $\nu$-absolutely continuous (resp. $\nu$-singular) if and only if there exists an $\nu$-absolutely continuous (resp. $\nu$-singular) non-negative measure $\tau$ on $(\mathcal{A},\RR)$ bounding $\mu$. 

A sesquilinear form $\t$ on $\D=S(\mathcal{A},\RR)$ is said to be {\it induced} by the measure $\mu$ on $(\mathcal{A},\RR)$ if
$$
\t(\phi,\psi)=\int_\mathcal{A} \phi \ol{\psi}d\mu, \qquad \forall \phi,\psi\in \D.
$$
Let $\mu,\nu$ be two measures on $(\mathcal{A},\RR)$ with $\nu$ non-negative. Consider the sesquilinear forms induced by $\mu,|\mu|$ and $\nu$; i.e.,
\begin{equation}
\label{form_meas}
 \t(\phi,\psi)=\int_\mathcal{A} \phi \ol{\psi}d\mu, \qquad \s(\phi,\psi)=\int_\mathcal{A} \phi \ol{\psi}d|\mu|, \qquad
 \w(\phi,\psi)=\int_\mathcal{A} \phi \ol{\psi}d\nu, 
\end{equation}
for all $\phi,\psi\in \D=S(\mathcal{A},\RR)$, respectively. Obviously, $\s\in M(\t)$ and $\t$ is non-negative (resp. symmetric) if and only if $\mu$ is non-negative (resp. signed).

\begin{lem}
	\label{lem_reg_for_meas}
	The following statements hold.
	\begin{enumerate}
		\item The form $\t$ is $\w$-regular if and only if $\mu$ is $\nu$-absolutely continuous.
		\item If $\mu$ is $\nu$-singular, then $\mu$ is $\w$-strongly singular. The converse is true if $\t$ is non-negative.
		\item If $\s$ is $\w$-singular, then $\mu$ is $\nu$-singular.
	\end{enumerate} 
\end{lem}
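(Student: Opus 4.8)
The plan is to reduce each statement to the classical correspondence, induced by a non-negative measure, between measure-theoretic and form-theoretic regularity/singularity: for non-negative measures $\lambda,\nu$ with induced forms $\s_\lambda,\w$, one has $\lambda\ll\nu$ (resp.\ $\lambda\perp\nu$) if and only if $\s_\lambda\ll\w$ (resp.\ $\s_\lambda\perp\w$). This follows from the correspondence of absolutely continuous and singular parts established in \cite[Theorem 5.5]{HSdeS} and \cite[Theorems 3.2, 3.4]{STT} (recalled in the Introduction), together with the elementary remark that the form induced by a non-negative measure vanishes exactly when the measure does. I apply this with $\lambda=|\mu|$, so $\s_\lambda=\s$, and combine it with the two measure-level equivalences stated just before the lemma ($\mu\ll\nu\Leftrightarrow|\mu|\ll\nu$ and $\mu\perp\nu\Leftrightarrow|\mu|\perp\nu$) and with the fact that $\s\in M(\t)$.

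For (i), the ``if'' part is immediate: $\mu\ll\nu$ gives $|\mu|\ll\nu$, hence $\s\ll\w$ by the correspondence, and since $\s\in M(\t)$ the form $\t$ is $\w$-regular by definition. For the ``only if'' part I would bypass the correspondence and test on characteristic functions, using the recalled fact that a $\w$-regular form satisfies $\ker(\w)\subseteq\ker(\t)$. If $\nu(A)=0$ then $\w[\chi_A]=\nu(A)=0$, so $\chi_A\in K(\w)=\ker(\w)\subseteq\ker(\t)$, and therefore $\mu(A)=\t[\chi_A]=0$; this is precisely condition (a1), i.e.\ $\mu\ll\nu$.

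For (ii), the first implication mirrors (i): $\mu\perp\nu$ yields $|\mu|\perp\nu$, hence $\s\perp\w$ by the correspondence, and $\s\in M(\t)$ makes $\t$ $\w$-strongly singular. For the converse under the hypothesis $\t\geq0$ I use that then $\mu=|\mu|$ and $\t=\s$; since a $\w$-strongly singular form is always $\w$-singular (Remark \ref{rem_stong->sing}(iii)), the non-negative form $\s=\t$ is $\w$-singular, so the correspondence gives $|\mu|\perp\nu$, that is $\mu\perp\nu$. Statement (iii) is then a one-line consequence: $\s$ is the form induced by the non-negative measure $|\mu|$, so if $\s$ is $\w$-singular the correspondence yields $|\mu|\perp\nu$ and hence $\mu\perp\nu$.

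The only delicate point is securing the correspondence in the exact sequential formulation used in this paper, where $\w$-absolute continuity and $\w$-singularity are defined through sequences in $\D$; one must check these match the measure-level definitions. Should a self-contained argument be preferred, the implication $|\mu|\ll\nu\Rightarrow\s\ll\w$ can be proved directly: given $(\phi_n)\subset\D$ with $\int_\mathcal{A}|\phi_n|^2\,d\nu\to0$ and $(\phi_n)$ Cauchy in $L^2(|\mu|)$, extract a subsequence converging $\nu$-a.e.\ to $0$ (hence $|\mu|$-a.e., since $|\mu|\ll\nu$) and simultaneously $|\mu|$-a.e.\ to its $L^2(|\mu|)$-limit $f$, which forces $f=0$ and thus $\s[\phi_n]\to0$; the singularity statements are handled analogously via condition (s1) and Lemma \ref{H_s_w_sing}. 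This subsequence and almost-everywhere bookkeeping is the one genuinely technical ingredient.
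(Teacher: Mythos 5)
Your proposal is correct and follows essentially the same route as the paper: reduce to the total variation $|\mu|$ and invoke the correspondence of \cite{STT} between measure-level and form-level absolute continuity/singularity, prove the ``only if'' of (i) by testing on characteristic functions via $\ker(\w)\subseteq\ker(\t)$, and obtain (ii) and (iii) from $\s\in M(\t)$ together with the non-negative case. The extra self-contained subsequence argument you sketch is not needed but does no harm.
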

\begin{proof}
	\begin{enumerate}
		\item[(i)] Assume $\t$ is $\w$-regular. By definition, there exists $\mathfrak{u}\in M(\t)$ and $\mathfrak{u} \ll \w$. If $A\in \RR$ and $\nu(A)=0$ then $\chi_A\in \ker \w \sub \ker \mathfrak{u} \sub \ker \t$. Therefore, $\mu(A)=0$.	Conversely, if $\mu$ is $\nu$-absolutely continuous, then so $|\mu|$ is and $\s\ll\w$ by \cite[Theorem 3.2]{STT}. Since $\s\in M(\t)$, $\t$ is $\w$-regular.
		\item[(ii)] In \cite[Theorem 3.2]{STT} it was proved that if $\mu$ is non-negative, then $\t$ is $\w$-singular if and only if $\mu$ is $\nu$-singular. In the general case, assume that $\mu$ is $\nu$-singular. This means that $|\mu|$ is $\nu$-singular and, consequently, $\s\perp \w$. Finally, $\s\in M(\t)$ implies that $\t$ is $\w$-strongly singular.
		\item[(iii)] If $\s$ is $\w$-singular, then $|\mu|$ is $\nu$-singular. Hence, $\mu$ is $\nu$-singular. \qedhere
	\end{enumerate}
\end{proof}

Now we can give the announced proof of the Lebesgue decomposition theorem of finite measures based on the ideas developed in this paper. We state it for reader's convenience. 

\begin{theo}
	Let $\RR$ be a $\sigma$-algebra on a non-empty set $\mathcal{A}$. Let $\nu,\mu$ be measures on $(\mathcal{A},\RR)$, $\nu$ being non-negative.
	There exist unique measures $\mu_a,\mu_s$ on $(\mathcal{A},\RR)$ such that
	\begin{enumerate}
		\item $\mu=\mu_a+\mu_s$;
		\item $\mu_a$ is $\nu$-absolutely continuous and $\mu_s$ is $\nu$-singular.
	\end{enumerate}
\end{theo}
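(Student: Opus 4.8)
The plan is to transport the problem to the induced forms $\t,\s,\w$ of (\ref{form_meas}) and to read the three--term form decomposition produced by Theorem \ref{Leb_th} (with the choice $\s\in M(\t)$) back as a two--term decomposition of $\mu$. Put $\lambda:=|\mu|+\nu$. Since simple functions are dense in $L^2$ of a finite measure, one identifies $\H_{\s+\w}=L^2(\lambda)$ and $\H_\w=L^2(\nu)$; the embedding $J\colon\pi_{\s+\w}(\phi)\mapsto\pi_\w(\phi)$ recalled before Theorem \ref{Leb_th} is then the canonical contraction $L^2(\lambda)\to L^2(\nu)$ (here $\nu\le\lambda$), and its closure $J^{**}$ sends $f$ to its $\nu$--class, so that $\ker J^{**}=\{f\in L^2(\lambda):f=0\ \nu\text{-a.e.}\}$. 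Applying Theorem \ref{Leb_th} gives $\t=\t_r+\t_m+\t_{ss}$ through the formulas (\ref{t_r}), built from the operator $T$ with $\pin{T\pi_{\s+\w}(\phi)}{\pi_{\s+\w}(\psi)}_{\s+\w}=\t(\phi,\psi)$ and the orthogonal projection $P$ of $L^2(\lambda)$ onto $\{\ker J^{**}\}^\perp$.

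The main obstacle is to show that $P$ is multiplication by a characteristic function $\chi_E$, with $E\in\RR$ and $\nu(E^c)=0$. For every $A\in\RR$ the map $J$ intertwines multiplication by $\chi_A$ on $L^2(\lambda)$ and on $L^2(\nu)$ on the dense domain of classes of simple functions; passing to the closure, $\ker J^{**}$ is invariant under each (self--adjoint) multiplication $M_{\chi_A}$, hence so is $\{\ker J^{**}\}^\perp$, and therefore $P$ commutes with every $M_{\chi_A}$. Writing $g:=P\chi_{\mathcal{A}}$ and using $PM_{\chi_A}=M_{\chi_A}P$, one checks that $P$ acts as multiplication by $g$ on simple functions, hence on all of $L^2(\lambda)$; since $P$ is a projection, $g=\chi_E$ for some $E\in\RR$. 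Finally $\chi_{E^c}=(I-P)\chi_{\mathcal{A}}\in\ker J^{**}$ means $\chi_{E^c}=0$ $\nu$--a.e., that is $\nu(E^c)=0$. Identifying this concentration set is exactly the measure--theoretic content that the Hilbert--space construction must reproduce, and it is the one step requiring care.

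With $P=M_{\chi_E}$ the remainder is routine. Evaluating (\ref{t_r}) on characteristic functions and using $\pin{T\pi_{\s+\w}(\chi_A)}{\pi_{\s+\w}(\chi_B)}_{\s+\w}=\mu(A\cap B)$, every term defining $\t_m$ carries the factor $\chi_E\chi_{E^c}=0$, so $\t_m=0$, while $\t_r(\chi_A,\chi_B)=\mu(E\cap A\cap B)$ and $\t_{ss}(\chi_A,\chi_B)=\mu(E^c\cap A\cap B)$. Hence, setting $\mu_a(A):=\mu(A\cap E)$ and $\mu_s(A):=\mu(A\cap E^c)$, one obtains two complex measures with $\mu=\mu_a+\mu_s$, and $\t_r,\t_{ss}$ are precisely the forms they induce. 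Since $\t_r$ is $\w$--regular, Lemma \ref{lem_reg_for_meas}(i) yields $\mu_a\ll\nu$; and because $\nu(E^c)=0$ while $\mu_s(A)=\mu_s(A\cap E^c)$, condition (s1) gives $\mu_s\perp\nu$.

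It remains to prove uniqueness, for which I would argue classically. If $\mu=\mu_a+\mu_s=\mu_a'+\mu_s'$ are two decompositions of the stated type, then $\rho:=\mu_a-\mu_a'=\mu_s'-\mu_s$ is at once $\nu$--absolutely continuous and $\nu$--singular. A complex measure with both properties vanishes: its total variation is concentrated on a $\nu$--null set by singularity and is $\nu$--absolutely continuous, hence $\rho=0$. Thus $\mu_a=\mu_a'$ and $\mu_s=\mu_s'$.
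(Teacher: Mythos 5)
Your proof is correct, and at the one genuinely non-trivial point it takes a different route from the paper's. That point is the identification of the projection $P$ with multiplication by a characteristic function $\chi_E$ with $\nu(E^c)=0$. The paper obtains $E$ measure-theoretically: it first treats the non-negative case, invokes \cite[Theorem 3.4]{STT} to produce additive set functions $\mu_a,\mu_s$ corresponding to $\s_a$ and $\s_s$, upgrades them to measures by checking continuity from below, and only then extracts $E$ from the singularity of $\mu_s$ (via condition (s1)) and verifies $P\pi_{\s+\w}(\phi)=\pi_{\s+\w}(\chi_E\phi)$ by a density computation. You instead identify $\H_{\s+\w}$ with $L^2(|\mu|+\nu)$ and $\ker J^{**}$ with the classes vanishing $\nu$-a.e., observe that this closed kernel is invariant under the self-adjoint multiplications $M_{\chi_A}$, conclude that $P$ lies in their commutant and is therefore multiplication by the idempotent $g=P\chi_{\mathcal{A}}=\chi_E$, and read off $\nu(E^c)=0$ from $\chi_{E^c}\in\ker J^{**}$. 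This is self-contained and arguably cleaner: it bypasses both the appeal to \cite[Theorem 3.4]{STT} and the continuity-from-below step, since $\mu_a(A)=\mu(A\cap E)$ is then automatically countably additive. Its only cost is the (harmless, for finite measures) identification of the abstract completion $\H_{\s+\w}$ with a concrete $L^2$ space, which you justify by density of simple functions. The remaining steps --- the vanishing of $\t_m$, Lemma \ref{lem_reg_for_meas}\emph{(i)} for $\mu_a\ll\nu$, condition (s1) for $\mu_s\perp\nu$, and the classical uniqueness argument --- coincide with the paper's.
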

\begin{proof}
	The uniqueness follows easily by the following argument. Indeed, assume that $\mu=\mu_a + \mu_s=\mu_a' + \mu_s'$, where $\mu_a,\mu_a'\ll \nu$ and $\mu_s,\mu_s '\perp \nu$. Then $\mu_a-\mu_a'=\mu_s'-\mu_s$; i.e., $\mu_a-\mu_a'$ is both absolutely continuous and singular with respect to $\nu$. 
	Thus, clearly,  $\mu_a=\mu_a'$ and $\mu_s=\mu_s'$.
	
	To prove the existences, let us define the forms $\t,\w,\s$ as in (\ref{form_meas}).  
	First of all, assume that $\mu$ is non-negative; i.e.,  $\t=\s\geq 0$. 
	Consider the Lebesgue decomposition $\s=\s_a+\s_s$ of $\s$ with respect to $\w$ as in Theorem \ref{Leb_pos}. 
	Moreover, with the notations introduced before Theorem \ref{Leb_th}, for all $\phi,\psi\in \D$,
	\begin{align*}
	(\s_a+\w)(\phi,\psi)&=\pin{P\pi_{\s+\w}(\phi)}{P\pi_{\s+\w}(\psi)}_{\s+\w},\\
	\s_s(\phi,\psi)&=\pin{(I-P)\pi_{\s+\w}(\phi)}{(I-P)\pi_{\s+\w}(\psi)}_{\s+\w}.
	\end{align*}
	
	We know (see \cite[Theorem 3.4]{STT}
	) that there exist additive set functions $\mu_a$ and $\mu_s$ on $(\mathcal{A},\RR)$ satisfying $\mu=\mu_a+\mu_s$ and
	\begin{align*}
	(\s_a+\w)(\phi,\psi)&=\int_\mathcal{A} \phi \ol{\psi}d(\mu_a+\nu), \\
	\s_s(\phi,\psi)&=\int_\mathcal{A} \phi \ol{\psi}d\mu_s, \qquad \forall \phi,\psi\in \D.
	\end{align*}
	In addition, $\mu_a$ is a $\nu$-absolutely continuous and $\mu_s$ is a $\nu$-singular in the sense of \cite[Section 3]{STT}; i.e, $\displaystyle \lim_{\nu(A)\to 0} \mu_a(A)=0$ 
	and
	$$
	\forall\epsilon>0\; \exists E_\epsilon\in \RR\text{ such that } \mu_s(E_\epsilon)<\epsilon \text{ and }\nu(\mathcal{A}\backslash E_\epsilon)<\epsilon.
	$$
	We prove that $\mu_a,\mu_s$ are continuous from below; then they must be measures (see \cite[Theorem 5.F]{Halmos_m}). Take $A\in \RR$ and $(A_n)\subset \RR$ an increasing sequence with $\bigcup_{n} A_n=A$. Therefore, 
	$$
	(\s+\w)[\chi_A-\chi_{A_n}]=(\s+\w)[\chi_{A\backslash{A_n}}]=(\mu+\nu)(A\backslash{A_n})\to 0,
	$$
	because $\mu+\nu$ is a measure. This means that $\pi_{\s+\w}(\chi_{A_n})\to\pi_{\s+\w}(\chi_A)$ in $\H_{\s+\w}$ and, by continuity,
	\begin{align*}
		\mu_a(A)&=\n{P\pi_{\s+\w}(\chi_A)}_{\s+\w}^2=\lim_{n\to +\infty} \n{P\pi_{\s+\w}(\chi_{A_n})}_{\s+\w}^2=\lim_{n\to +\infty} \mu_a(A_n).
	\end{align*}
	In the same way, $\displaystyle\mu_s(A)=\lim_{n\to +\infty} \mu_s(A_n)$. Consequently, $\mu_a$ is a $\nu$-absolutely continuous measure and $\mu_s$ is a $\nu$-singular measure. 
	
	Before we move on the general case without any condition on the sign of $\mu$, we give an expression to the projector $P$. There exists $E\in \RR$ such that $\mu_s(E)=0$ and $\nu(E^c)=0$ 
	and, consequently, for all $A\in \RR$,
	\begin{align}	
	\label{def_sing_meas3}
	(\mu_a+\nu)(A\cap E)=(\mu_a+\nu)(A), \qquad \mu_s(A\cap E)=0.
	\end{align}
	Let $\phi,\psi\in \D$. Thus, $\phi\ol{\psi}=\sum_{k=1}^{n}a_k \chi_{A_k}$, for some $n\geq1$ and $A_k\in \RR$, disjoint subsets. Applying (\ref{def_sing_meas3})  we obtain that
	\begin{align*}
	(\s_a+\w)(\phi,\psi)&=\int_\mathcal{A} \phi \ol{\psi}d(\mu_a+\nu) =\sum_{k=1}^n a_i  (\mu_a+\nu)(A_k) \\
	&=\sum_{k=1}^n a_i  (\mu_a+\nu)(A_k \cap E) +\sum_{k=1}^n a_i  \mu_s(A_k \cap E)\\
	&=\int_\mathcal{A} \chi_E \phi \ol{\psi}d(\mu_a+\nu) +\int_\mathcal{A} \chi_E \phi \ol{\psi}d\mu_s \\
	&=\int_\mathcal{A} \chi_E \phi \ol{\psi}d(\mu+\nu).
	\end{align*}
	Clearly, $\chi_E \phi\in \D$. Therefore, we can write
	$$
	\pin{P\pi_{\s+\w}(\phi)}{\pi_{\s+\w}(\psi)}_{\s+\w}=(\s_a+\w)(\phi,\psi)=\pin{\pi_{\s+\w}(\chi_E\phi)}{\pi_{\s+\w}(\psi)}_{\s+\w}.
	$$
	Since $\D\backslash \ker(\s+\w)$ is dense in $\H_{\s+\w}$, we have 
	\begin{equation}
	\label{expr_P}
	P\pi_{\s+\w}(\phi)=\pi_{\s+\w}(\chi_E\phi), \qquad \forall \phi \in \D.
	\end{equation}
	
	Now, let $\mu$ be a (complex) measure on $(\mathcal{A}, \RR)$.
	Let $\t=\t_r+\t_m+\t_{ss}$ be the Lebesgue decomposition of $\t$ with respect to $\s\in M(\t)$ and $\w$.\\
	We can repeat the arguments above for $\s$ which is non-negative. Thus, $P$ act as in (\ref{expr_P}) with some $E\in \RR$. Taking into account (\ref{t_r}),
	\begin{align*}
	\t_r(\phi,\psi)&=\pin{TP\pi_{\s+\w}(\phi)}{P\pi_{\s+\w}(\psi)}_{\s+\w}\\
	&=\pin{T\pi_{\s+\w}(\chi_E\phi)}{\pi_{\s+\w}(\chi_E\psi)}_{\s+\w}\\
	&=\t(\chi_E\phi,\chi_E\psi)\\
	&=\int_\mathcal{A} \chi_E\phi\ol{\psi}d\mu, \qquad\qquad \forall \phi,\psi\in \D.
	\end{align*}
	Hence, $\mu_a(A):=\t_r[\chi_A]=\mu(A\cap E)$, for $A\in \RR$, is a measure on $(\mathcal{A},\RR)$. We can conclude that $\mu_a$ is $\nu$-absolutely continuous applying Lemma \ref{lem_reg_for_meas}. Also $\mu_s:=\mu-\mu_a$ is a measure on $(\mathcal{A},\RR)$ and, in particular, $\mu_s(A)=\t_s[\chi_A]=\mu(A\cap E^c)$. This shows that $\nu \perp \mu_s$.
\end{proof}

This proof does not involve the Jordan decomposition of a signed measure and, in the general case, it works also taking for $\s$ the sesquilinear form induced by any non-negative measure which bounds $\mu$.

\section*{Acknowledgments}

The author gratefully acknowledges the hospitality of the Alfréd Rényi Institute of Mathematics - Hungarian Academy of Sciences - and the Eötvös Loránd University of Budapest. He also wishes to thank Dr. T. Titkos and Dr. Zs. Tarcsay for many conversations. \\
This work was supported by the ''National Group for Mathematical Analysis, Probability and their Applications'' (GNAMPA – INdAM, project ''Problemi spettrali e di rappresentazione in quasi *-algebre di operatori'' 2017).

\vspace*{0.5cm}
\begin{center}
\textsc{Rosario Corso, Dipartimento di Matematica e Informatica} \\
\textsc{Università degli Studi di Palermo, I-90123 Palermo, Italy} \\
{\it E-mail address}: {\bf rosario.corso@studium.unict.it}
\end{center}


\begin{thebibliography}{100}
	\addcontentsline{toc}{chapter}{References}

	\bibitem{Ando}
	T. Ando, {\it Lebesgue-type decomposition of positive operators}, Acta Sci. Math. (Szeged) 38 (1976), 253–260.

	\bibitem{Tp_DB}
	S. Di Bella, C. Trapani, {\it Some representation theorems for sesquilinear forms}, J. Math. Anal. Appl., 451 (2017), 64-83.

	\bibitem{RC_CT}
	R. Corso, C. Trapani, {\it Representation Theorems for Solvable Sesquilinear Forms}, Integral Equations Operator Theory, 89(1) (2017), 43-68.
	
	\bibitem{Second}
	R. Corso, {\it A Kato's second type representation theorem for solvable sesquilinear forms}, published online in J. Math. Anal. Appl., 2017, https://doi.org/10.1016/j.jmaa.2017.12.058.	

	\bibitem{DS}
	N. Dunford, J. Schwarz, {\it Linear operators, Part I}, Interscience, New York, 1958.

    \bibitem{Halmos_m}
	P. R. Halmos, {\it Measure theory}, Springer-Verlag, 1974.

	\bibitem{Halmos_nr}
	P. R. Halmos, {\it A Hilbert space problem book}, second edition, Springer-Verlag, New York, 1982.

	\bibitem{HSdeS}
	S. Hassi, Z. Sebestyén, H. S. V. de Snoo, {\it Lebesgue type decompositions for nonnegative forms}, J. Funct. Anal., 257 (2009), 3858–3894.
	
	\bibitem{Kato}
	T. Kato, {\it Perturbation Theory for Linear Operators}, Springer, Berlin, 1966.
	
	\bibitem{Kos_sing}
	V. Koshmanenko,	{\it On operator representation for non-closable quadratic forms and scattering problem}, Soviet Math. Dokl., 20(2) (1979), 294-297.
	
	\bibitem{Kos}
	V. Koshmanenko, {\it Singular Quadratic Forms in Perturbation Theory}, Springer Science, Dordrecht, 1999.
	
	\bibitem{Rao}
	K. P. S. B. Rao, B. Rao, {\it Theory of Charges: A Study of Finitely Additive Measures}, Academic Press, New York, 1983.
	
	\bibitem{Rudin}
	W. Rudin, {\it Real and Complex Analysis}, McGraw-Hill, Boston, 1987.
	
	\bibitem{Schm}
	K. Schm\"{u}dgen, {\it Unbounded Self-adjoint Operators on Hilbert Space}, Springer, Dordrecht, 2012.
	
	\bibitem{STT}
	Z. Sebestyén, Zs. Tarcsay, T. Titkos, {\it Lebesgue decomposition theorems}, Acta Sci. Math. (Szeged), 79 (1–2) (2013), 219–233.
	
		
	\bibitem{Simon}
	B. Simon, {\it A canonical decomposition for quadratic forms with applications to monotone convergence theorems}, J. Funct. Anal. 28 (1978), 371–385.
	
\end{thebibliography}
\end{document}